\documentclass{article}
\usepackage{blindtext}
\usepackage{geometry}
 \geometry{
 a4paper,
 total={170mm,257mm},
 left=20mm,
 top=20mm,
 }
\setlength\parindent{0pt}
\usepackage{amsmath}
\usepackage[normalem]{ulem}
\usepackage{amsfonts}
\usepackage{amssymb}
\usepackage{amsth m}
\usepackage[utf8]{inputenc}
\usepackage{enumerate}
\usepackage{changepage}
\usepackage[square, numbers]{natbib}  % For square bracketed references
\usepackage{graphicx}
\usepackage{subcaption}
\usepackage{float}
\usepackage{enumitem}
\usepackage{xcolor}
\usepackage{easyReview}
\usepackage{lineno}
\usepackage{hyperref}
%\linenumbers % <-- turn on line numbering
\usepackage[capitalise]{cleveref}

\theoremstyle{remark}
\newtheorem{remark}{Remark}

\usepackage{setspace}
\doublespacing

% Define theorem environments
\newtheoremstyle{custom} % Define a custom style
  {3pt} % Space above
  {3pt} % Space below
  {\itshape} % Body font (use \normalfont to make it non-italic)
  {} % Indent amount
  {\bfseries} % Theorem head font
  {.} % Punctuation after theorem head
  {.5em} % Space after theorem head
  {} % Theorem head spec

\theoremstyle{custom} % Apply the custom style

\newtheorem{theorem}{Theorem}
\newtheorem{proposition}{Proposition}
\begin{document}

%\maketitle

%\vspace{10pt}
%\setlength{\marginparwidth}{0pt}
%\setlength{\marginparsep}{0pt}
%\marginparwidth = 10pt

\begin{center}

{\Large \bf Oscillatory Regimes in a Game-Theoretic Model for Mosquito Population Dynamics under Breeding Site Control} 

\vspace{.25cm}
 
{\large Mohammad Rubayet Rahman}\footnote[1]{Department of Mathematics, Oklahoma State University, Stillwater, OK 74078, e-mail: rubayet.rahman$@$okstate.edu},
{\large Chanaka Kottegoda}\footnote[2]{Department of Mathematics and Physics, Marshall University, Huntington, WV, e-mail: kottegoda@marshall.edu },
{\large and Lucas M. Stolerman}\footnote[3]{Department of Mathematics, Oklahoma State University, Stillwater, OK 74078, e-mail: lucas.martins$\_$stolerman$@$okstate.edu}
\vspace{.03cm}
\end{center}

\date{}

\begin{abstract}

Mosquito-borne diseases remain a major public-health threat, and the effective control of mosquito populations requires sustained household participation in removing breeding sites. While environmental drivers of mosquito oscillations have been extensively studied, the influence of spontaneous household decision-making on the dynamics of mosquito populations remains poorly understood. We introduce a game-theoretic model in which the fraction of households performing breeding site control evolves through imitation dynamics driven by perceived risks. Household behavior regulates the carrying capacity of the aquatic mosquito stage, creating a feedback between control actions and mosquito population growth. For a simplified model with constant payoffs, we characterize four locally stable equilibria, corresponding to full or no household control and the presence or absence of mosquito populations. When the perceived risk of not controlling breeding sites depends on mosquito prevalence, the system admits an additional equilibrium with partial household engagement. We derive conditions under which this equilibrium undergoes a Hopf bifurcation, yielding sustained oscillations arising solely from the interaction between mosquito abundance and household behavior. Numerical simulations and parameter explorations further describe the amplitude and phase properties of these oscillatory regimes.

\end{abstract}
%\tableofcontents

\parskip=6pt

\section{Introduction}

Mosquito-borne diseases (MBDs) such as dengue, Zika, chikungunya and yellow fever continue to pose significant public-health challenges worldwide \cite{CDC_deadliest_animal}. For example, recent estimates indicate that 100 to 400 million dengue infections occur each year, resulting in a substantial burden of severe cases and deaths \cite{who2024dengue}. MBD transmission occurs through bites from infected female mosquitoes, which acquire the virus when feeding on infected humans and subsequently transmit it to susceptible individuals. In the absence of universally effective vaccines or large-scale therapeutic options for most MBDs, public-health strategies rely heavily on reducing mosquito populations \cite{wilson2020importance}. A particular challenge involves mosquito preference for biting and feeding on humans. Species such as the \emph{Ae. aegypti} and \emph{Ae. albopictus}  are highly anthropophilic and primarily breed in small, artificial containers near households. Hence, effective control of mosquito populations must include the removal of breeding sites \cite{gubler2011dengue,wong2011oviposition,buhler2019environmental}.

Community involvement is critical for reducing or eliminating larval mosquito habitats in urban areas \cite{WHO_engaging_communities_dengue}. Depending on the effectiveness of communication from local governments and on the perception of the danger posed by MBDs, households may be more or less inclined to engage in breeding site control \cite{raude2012public,dussault2022arbovirus}. Strong engagement can be driven by the perceived advantages of controlling mosquito populations, despite the labor required to remove water containers.

Mosquito population dynamics can be complex and driven by an interplay of environmental and biological factors. Empirical studies have reported strong seasonal components and oscillatory patterns associated with fluctuations in temperature and rainfall \cite{whittaker2022novel,lana2018assessment,shaman2007reproductive}. Mathematical models have successfully elucidated mechanisms underlying these oscillations, including the influence of climate-driven parameters, nonlinearities, and critical developmental delays \cite{abdelrazec2017mathematical,okuneye2018mathematical, wan2014new,juan2025two}. However, the role of human behavior in shaping mosquito dynamics remains poorly understood despite mounting evidence of its relevance \cite{renard2023interaction,thongsripong2021human}. In the context of breeding site control, existing models have examined oscillations arising from fluctuations in environmental consciousness \cite{predescu2006analysis,predescu2007dynamics} or from impulsive mechanical removal driven by periodic human interventions \cite{dumont_mosquito_control}. Yet, to the best of our knowledge, no modeling work has investigated how spontaneous household engagement, modulated by risk perception, may generate nontrivial dynamics.

Here we introduce a model to investigate mosquito population dynamics under breeding site control. Inspired by a vaccination game with imitation dynamics proposed by Bauch \cite{bauch2005imitation}, we adopt a game-theoretic framework in which the fraction of households performing control is determined by payoffs based on perceived risks.  To integrate the behavioral and entomological components, we assume a behavior-dependent carrying capacity for the aquatic population. For a simpler model with constant payoffs, we identify four locally stable steady states corresponding to full or no breeding site control and the presence or absence of mosquito populations. We then extend the model by making the payoff for not controlling breeding sites dependent on mosquito prevalence, capturing the idea that households perceive a higher risk when they frequently encounter mosquitoes. In this case, we obtain a fifth steady state with partial breeding site control, and the model undergoes a Hopf bifurcation under certain conditions. We complement our analytical results with numerical simulations and a parametric exploration of the amplitude and phase characteristics of the resulting stable oscillations.

The paper is organized as follows. In Section \ref{model}, we present the behavioral–entomological model, detailing the mosquito population dynamics,  the imitation dynamics governing household control,  and the integration with a behavior-dependent carrying capacity. Section \ref{analysis} presents the analytical results for the model with constant payoffs and the full game-theoretic model with a prevalence-dependent payoff, including the analysis leading to the identification of a Hopf bifurcation. Section \ref{numerics} complements the theoretical findings with numerical simulations illustrating the trajectories, oscillatory regimes, and the dependence of amplitude and phase on key parameters. Finally,  in Section \ref{conclusions} we discuss the implications of our results, limitations of the modeling framework, and potential avenues for future research.

\section{Model formulation} 
\label{model}

\subsection{Mosquito population dynamics}

The mosquito life cycle consists of four stages: egg, larva, pupa, and adult. Here we group the first three stages into a single aquatic population compartment. The adult population compartment represents fully developed mosquitoes capable of flight, reproduction, and disease transmission.  Following the approach proposed by  Dumont and Thuilliez  \cite{dumont_mosquito_control},  we adopt the entomological model:
\begin{equation}
\begin{aligned}
   \frac{dL_v}{dt} &  = r b A_v \left(1 - \frac{L_v}{K_v}\right) - (\nu_L + \mu_L) L_v \\
     \frac{dA_v}{dt} & = \nu_L L_v - \mu_A A_v.
\end{aligned}
  \label{Eq:entomological_model}
\end{equation}

Here, $L_v$  and $A_v$ represent the aquatic and adult population compartments, respectively. The sex ratio $r$ represents the proportion of females in the overall mosquito population. The egg-laying rate $b$ denotes the average number of eggs laid per unit of time by a female mosquito. The transition rate $\nu_L$ captures the rate at which mosquitoes progress from the aquatic stage to adulthood.  The parameters  $\mu_L$ and $\mu_A$ define the natural death rates for mosquitoes in their aquatic and adult stages, respectively. Finally, $K_v$ is the carrying capacity, representing the availability of mosquito breeding sites. The non-linear term \(rbA_v\left(1 - \frac{L_v}{K_v} \right) \) represents the growth rate of aquatic mosquitoes. The basic offspring number, defined as
$$N = \frac{\nu_L r b}{(\nu_L + \mu_L)\mu_A}$$
quantifies the average number of adult mosquitoes produced by a single adult throughout its lifetime.

\subsection{Household control of mosquito breeding sites}

 We assume that households decide whether to engage in mechanical control of mosquito breeding sites, hereafter referred to as breeding site control or just control.  Following the imitation game framework proposed in the context of vaccination strategies \cite{bauch2005imitation,wang2016statistical}, we define the perceived payoff for performing breeding site control as $f_c=-r_c$, where $r_c>0$ is the perceived risk of experiencing a burden, such as time, or monetary costs associated with the control effort. The perceived payoff for households not performing control, $f_d$, is considered to depend on the perceived risk of mosquito-borne infection $(r_d>0)$ and the perceived risk exposure to infectious mosquito bites over time, which is assumed to increase with the adult mosquito population $A_v$. For simplicity, here we adopt a linear model

$$
f_d\left(A_v\right)=-r_d  m  A_v
$$

where $m$ is a parameter quantifying the sensitivity of households' behavior to changes in mosquito prevalence. The imitation dynamics assume that households tend to adopt strategies with higher perceived payoffs \cite{chang2020game}. Hence, the fractions $w$ and $\bar{w}$ of households performing and not performing breeding site control satisfy the system:
\begin{equation*}
\textcolor{black}{
\begin{aligned}
 \displaystyle  \frac{dw}{dt} &  = k f_c \bar{w} w - k f_d \bar{w} w    \\
 \displaystyle  \frac{d\bar{w}}{dt} &  =  k f_d \bar{w} w - k f_c \bar{w} w  
\end{aligned}
}
\end{equation*}

where $k$ is a scale coefficient representing the imitation rate. Since $\bar{w} = 1 - w$, a differential equation governing $w$ is given by
 \begin{equation}
\begin{split}
\frac{d w}{d t}& =k w(1-w)\left(f_c-f_d\left(A_v\right)\right)\\
 & =  k  w(1-w)\left(-r_c+r_d  m  A_v\right).
\end{split}
\label{Eq:imitation_game}
\end{equation}
 This model structure mirrors the vaccination game dynamics proposed by Bauch in \cite{bauch2005imitation}, with breeding site control and adult mosquito populations playing the role of vaccination and disease prevalence, respectively.

\subsection{Model integration with a behavior-dependent carrying capacity} 

Our model captures the impact of mosquito control measures on available breeding sites, represented by the carrying capacity $K_v$. Figure~\ref{fig:schematics} illustrates the integration of the entomological model~\eqref{Eq:entomological_model} with the behavior dynamics given by Equation~\eqref{Eq:imitation_game}. To incorporate the effect of household control on mosquito populations, we assume that the growth rate $\Lambda$ depends on $w$, which regulates the carrying capacity of the aquatic stage. Specifically, we define 
\(\Lambda(w) = rbA_v \left(1 - \frac{L_v}{K_v(w)} \right)\), 
where \(K_v(w)\) is a linear function that attains its maximum value \(K_{v_{\text{max}}}\) when \(w = 0\) and minimum value \(K_{v_{\text{min}}}\) when \(w = 1\).

\begin{figure}[H]  
  \centering
  \includegraphics[width=0.6\textwidth]{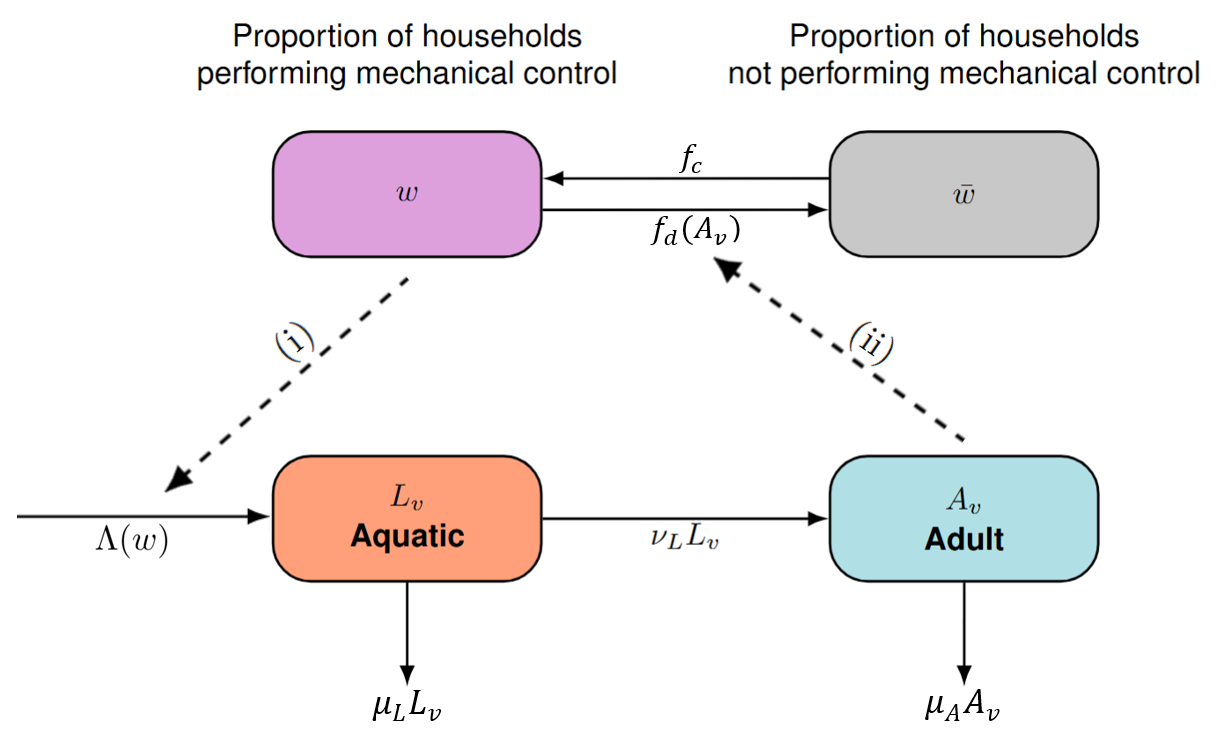}  
\caption{
We model the dynamic impact of household mosquito control measures on breeding site availability. Here, \( w \) and \( \bar{w} \) denote the proportions of households performing and not performing breeding site control, respectively. Dashed arrows indicate how the entomological and behavioral components are integrated: (i) the growth rate of aquatic mosquitoes, \( \Lambda(w) \), as a function of the proportion of households performing control; and (ii) the perceived payoff for not performing control, \( f_d(A_v) \), as a function of the adult mosquito population \( A_v \).
}
 \label{fig:schematics}   
\end{figure}

The model equations are given by the following system: 
\begin{equation}
\begin{aligned}
\frac{dL_v}{dt} &= rbA_v \left(1 - \frac{L_v}{K_v(w)} \right) - (\nu_L + \mu_L)L_v, \\
\frac{dA_v}{dt} &= \nu_L L_v - \mu_A A_v, \\
\frac{dw}{dt}   &= k w(1 - w)\left[-r_c + r_d m A_v\right].
\end{aligned}
\label{eq:behavioral_entomological_system}
\end{equation}

where $K_v(w) = K_{v_{\text{max}}} - w(K_{v_{\text{max}}} - K_{v_{\text{min}}})$. Intuitively, when the payoff for performing breeding site control exceeds that of not performing it (i.e., \( f_c > f_d  \) or $r_d m A_v > r_c $), the fraction of households engaging in control increases (\( \frac{dw}{dt} > 0 \)), leading to a reduction in breeding sites as reflected by a decrease in the carrying capacity \( K_v(w) \). Conversely, when \( f_c < f_d \), household participation declines (\( w \) decreases), increasing available breeding sites. The initial conditions for the system at time \( t = 0 \) are all non-negative, with \( w(0) \in [0, 1] \). Since the right-hand side of System~\eqref{eq:behavioral_entomological_system} is continuously differentiable, the Cauchy-Lipschitz theorem guarantees that the initial value problem admits a unique maximal solution. Moreover, the set
\[
K \;=\; \Bigl\{(L_v,A_v,w)\in \mathbb{R}^3:\; 0\le L_v\le K_{v_{\max}},\;\; 0\le A_v\le \tfrac{\nu_L}{\mu_A}K_{v_{\max}}, \  0 \leq w \leq 1\Bigr\}.
\]
is positively invariant by~\eqref{eq:behavioral_entomological_system}. In what follows, we present a steady state and local stability analysis of our model. To build intuition, we first examine the simpler case where \( f_d =- r_d \) (section \ref{simple_model}). We refer to System~\eqref{eq:behavioral_entomological_system} as the \emph{full game-theoretic model}, which is analyzed in section \ref{full_GTM}.

\section{Steady states and local stability analysis} \label{analysis}

\subsection{A simplified model with constant payoffs} 
\label{simple_model}

We first consider the case where the perceived payoff for not performing breeding site control is given by \( f_d = -r_d \), with \( r_d \) representing a constant perceived risk of mosquito-borne infection. This case paves the way for the analysis of System~\eqref{eq:behavioral_entomological_system}. It is also motivated by scenarios in which individuals' perception of exposure to mosquito bites remains stable over time—for instance, in environments with limited access to entomological information, or where perceptions are shaped primarily by habit, cultural norms, or long-term awareness campaigns. The model equations are given by the following system: 
\begin{equation} 
\begin{aligned}
\frac{dL_v}{dt} &= rbA_v \left(1 - \frac{L_v}{K_v(w)} \right) - (\nu_L + \mu_L)L_v, \\
\frac{dA_v}{dt} &= \nu_L L_v - \mu_A A_v, \\
\frac{dw}{dt}   &= k w(1 - w)\left[-r_c + r_d\right],
\end{aligned}
\label{eq:simplified_model_1}
\end{equation}

where $K_v(w) = K_{v_{\text{max}}} - w(K_{v_{\text{max}}} - K_{v_{\text{min}}})$.

\textbf{Steady States}

To find biologically plausible (nonnegative) steady states of System~\eqref{eq:simplified_model_1}, we must find $L_v, A_v, w$ such that the time derivatives of all model components are zero. Therefore, we must solve the following system:
\begin{equation*}
\begin{aligned}
rb A_v \left(1 - \frac{L_v}{K_v(w)} \right) - (\nu_L + \mu_L) L_v &= 0, \\
\nu_L L_v - \mu_A A_v &= 0, \\
kw(1 - w)(-r_c + r_d) &= 0.
\end{aligned}
\end{equation*}

We list the steady states in Proposition \ref{prop: Baseline Model}. The proof can be found in the supplementary material. 

\begin{proposition}\label{prop: Baseline Model}
The steady states \((L_v^*, A_v^*, w^*)\) of System~\eqref{eq:simplified_model_1}, along with their interpretation, are as follows:

\underline{\text{$E_{01}$:}} $\left(L_v^*, A_v^*, w^*\right)=(0,0,0)$: Mosquito-free, no breeding site control.

\underline{\text{$E_{02}$:}} $\left(L_v^*, A_v^*, w^*\right)=(0,0,1)$: Mosquito-free, full breeding site control.

\underline{\text{$E_{03}$:}}
$$
\left(L_v^*, A_v^*, w^*\right)=\left(K_{v_{\max }}\left(1-\frac{1}{{N}}\right), \frac{\nu_L}{\mu_A} K_{v_{\max }}\left(1-\frac{1}{{N}}\right), 0\right)
$$
if and only if ${N}>1$: Mosquito-positive, no breeding site control.

\underline{\text{$E_{04}$:}}
$$
\left(L_v^*, A_v^*, w^*\right)=\left(K_{v_{\min }}\left(1-\frac{1}{{N}}\right), \frac{\nu_L}{\mu_A} K_{v_{\min }}\left(1-\frac{1}{{N}}\right), 1\right)
$$
if and only if ${N}>1$: Mosquito-positive, full breeding site control.

\end{proposition}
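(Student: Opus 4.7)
The plan is to exploit the triangular structure of the steady-state system: the third equation involves only $w$ (together with the mosquito variables through the factor $w(1-w)$), so I would solve it first and then use the remaining two equations. Under the generic assumption $r_c \neq r_d$ the bracketed term $(-r_c + r_d)$ is nonzero, so $kw(1-w)(-r_c+r_d)=0$ forces $w \in \{0,1\}$. Each of these choices freezes the carrying capacity at $K_v(0) = K_{v_{\max}}$ or $K_v(1) = K_{v_{\min}}$, reducing the remaining problem to the pure entomological subsystem with a fixed $K_v$.

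Next, I would use the second steady-state equation to eliminate $L_v$ via $L_v = \frac{\mu_A}{\nu_L}A_v$, and substitute into the first equation. After factoring, the first equation collapses to $A_v\bigl[rb(1 - L_v/K_v(w)) - (\nu_L+\mu_L)\tfrac{\mu_A}{\nu_L}\bigr] = 0$. The trivial branch $A_v = 0$ (which forces $L_v=0$) combined with the two values of $w$ yields the mosquito-free equilibria $E_{01}$ and $E_{02}$. The nontrivial branch forces the bracketed factor to vanish, giving $L_v/K_v(w) = 1 - 1/N$ where $N = \nu_L r b / [(\nu_L + \mu_L)\mu_A]$ is exactly the basic offspring number defined in Section~\ref{model}. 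Hence $L_v = K_v(w)(1 - 1/N)$ and $A_v = \frac{\nu_L}{\mu_A} L_v$.

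Finally, I would impose biological plausibility: for $L_v$ and $A_v$ to be strictly positive on the nontrivial branch, one needs $1 - 1/N > 0$, i.e., $N > 1$. Substituting $w=0$ recovers $E_{03}$ with carrying capacity $K_{v_{\max}}$, and substituting $w=1$ recovers $E_{04}$ with carrying capacity $K_{v_{\min}}$, matching the formulas in the statement. A brief uniqueness check confirms these are the only nonnegative solutions, since the steps above are equivalences under the genericity assumption on $r_c, r_d$.

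There is no real obstacle here; the argument is essentially algebraic bookkeeping. The only subtlety worth flagging is the degenerate case $r_c = r_d$, in which the third equation is satisfied for every $w \in [0,1]$ and a one-parameter family of equilibria arises; I would mention this explicitly as an excluded non-generic case, consistent with the setup of the model in which $r_c$ and $r_d$ represent distinct perceived risks.
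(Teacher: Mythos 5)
Your proposal is correct and follows essentially the same route as the paper's proof: solve the decoupled behavioral equation to get $w\in\{0,1\}$, use $\nu_L L_v=\mu_A A_v$ to reduce the first equation to a factored form whose nontrivial branch gives $L_v=K_v(w)\left(1-\tfrac{1}{N}\right)$, and impose $N>1$ for positivity. The only (harmless) differences are that you factor out $A_v$ where the paper factors out $L_v$, and that you flag the degenerate case $r_c=r_d$ slightly more explicitly than the paper does.
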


Having obtained explicit expressions for the steady states and the parameter regimes in which they exist, we analyze their local stability under small perturbations. Using the classical linearization approach based on the Hartman–Grobman Theorem~\cite{wiggins2003introduction,strogatz2018nonlinear}, we derive conditions under which each steady state is locally asymptotically stable (LAS) or unstable. The Jacobian matrix for the system is given by
\begin{equation}
J = \begin{bmatrix}
-\frac{r b A_v}{K_v(w)} - (\nu_L + \mu_L) & r b \left(1 - \frac{L_v}{K_v(w)}\right) & \frac{- r b A_v L_v (K_{v_{\max}} - K_{v_{\min}})}{(K_v(w))^2} \\
\nu_L & -\mu_A & 0 \\
0 & 0 & k(-r_c+r_d) (1 - 2w)
\end{bmatrix}.
\label{jacobian_model_1}
\end{equation}

We evaluate the Jacobian matrix \eqref{jacobian_model_1} at each steady state and compute its eigenvalues. The steady state is LAS if all eigenvalues have negative real parts, and unstable if at least one eigenvalue has a positive real part. The stability conditions are summarized in the following theorem.

\begin{theorem}\label{thm: Baseline Model}
The local stability of the steady states for System~\eqref{eq:simplified_model_1} is characterized as follows:
\begin{enumerate}
\item If \( r_c - r_d > 0 \), and $N<1$ then $E_{01}$ is LAS; $E_{02}$ is unstable.

\item If \( r_c - r_d < 0 \), and $N<1$ then $E_{02}$ is LAS; $E_{01}$ is unstable.

\item If \( r_c - r_d > 0 \), and $N>1$ then $E_{03}$ is LAS; $E_{01}$, $E_{02}$, and $E_{04}$ are unstable.

\item If \( r_c - r_d < 0 \), and $N>1$ then $E_{04}$ is LAS; $E_{01}$, $E_{02}$, and $E_{03}$ are unstable.

\end{enumerate}

\end{theorem}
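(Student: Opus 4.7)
The plan is to linearize System~\eqref{eq:simplified_model_1} at each of the four steady states and exploit the block-triangular structure of the Jacobian~\eqref{jacobian_model_1}. Because the third row of $J$ has zeros in its first two entries, its characteristic polynomial factors, and one eigenvalue is read off immediately as
$$\lambda_w \;=\; k(r_d-r_c)(1-2w^{*}),$$
while the remaining two eigenvalues come from the upper-left $2{\times}2$ block $J_{LA}$ acting on the $(L_v,A_v)$ coordinates. This decoupling reduces the stability problem to independently controlling the sign of $\lambda_w$ and the signs of $\mathrm{tr}(J_{LA})$ and $\det(J_{LA})$.

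First I would handle the mosquito-free equilibria $E_{01}$ and $E_{02}$. At both, $L_v^{*}=A_v^{*}=0$, so
$$J_{LA}=\begin{bmatrix}-(\nu_L+\mu_L)&rb\\ \nu_L&-\mu_A\end{bmatrix},$$
giving $\mathrm{tr}(J_{LA})=-(\nu_L+\mu_L+\mu_A)<0$ and $\det(J_{LA})=(\nu_L+\mu_L)\mu_A(1-N)$. Hence both eigenvalues of $J_{LA}$ have negative real part if and only if $N<1$. Since $w^{*}=0$ at $E_{01}$ we have $\lambda_w=k(r_d-r_c)$, whereas $w^{*}=1$ at $E_{02}$ yields $\lambda_w=k(r_c-r_d)$. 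Combining these three sign conditions directly produces cases (1) and (2) of the theorem and also the instability of $E_{01},E_{02}$ whenever $N>1$.

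Next, for the mosquito-positive equilibria $E_{03}$ and $E_{04}$ I would substitute the steady-state identities $L_v^{*}/K_v(w^{*})=1-1/N$ and $A_v^{*}=(\nu_L/\mu_A)L_v^{*}$, which reduce the off-diagonal entry $rb(1-L_v^{*}/K_v(w^{*}))$ to $(\nu_L+\mu_L)\mu_A/\nu_L$. After this simplification, $\det(J_{LA})$ collapses to $rb\nu_L(1-1/N)$, strictly positive under the existence condition $N>1$, while $\mathrm{tr}(J_{LA})$ remains a sum of strictly negative terms. Thus $J_{LA}$ contributes two eigenvalues with negative real part at both $E_{03}$ and $E_{04}$ whenever they exist, and the full stability is governed by $\lambda_w$ alone: negative iff $r_c>r_d$ at $E_{03}$ (where $1-2w^{*}=1$) and iff $r_c<r_d$ at $E_{04}$ (where $1-2w^{*}=-1$). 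Hartman--Grobman then upgrades each hyperbolic spectral picture to local asymptotic stability or instability.

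I do not anticipate a deep obstacle; the work is mostly organizational. The main care needed is to verify the algebraic simplifications at $E_{03},E_{04}$ using the steady-state relations (so that the $(\nu_L+\mu_L)\mu_A$ cross-term really cancels in $\det(J_{LA})$) and to keep track of the sign flip of $\lambda_w$ between $w^{*}=0$ and $w^{*}=1$ across the four parameter regimes. Because the theorem assumes strict inequalities in both $N-1$ and $r_c-r_d$, no eigenvalue lies on the imaginary axis, so every equilibrium considered is hyperbolic and no center-manifold argument is needed.
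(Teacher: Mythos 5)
Your proposal is correct and follows essentially the same route as the paper: both exploit the block-triangular Jacobian to split off the behavioral eigenvalue $k(r_d-r_c)(1-2w^{*})$ and reduce the rest to the $2\times 2$ entomological block, whose stability is equivalent to $N<1$ at $E_{01},E_{02}$ and automatic (given existence, $N>1$) at $E_{03},E_{04}$. Your use of trace/determinant criteria in place of the paper's explicit quadratic roots, and the clean cancellation giving $\det(J_{LA})=rb\nu_L(1-1/N)$, are only presentational differences.
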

\color{black}

\begin{proof}

 We proceed to analyze each steady state separately, proving the claims from items 1--4.

 \begin{itemize}

 \item \underline{$E_{01}$:} We evaluate the Jacobian matrix at the steady state $E_{01}$ and find the eigenvalues. Then we derive the conditions for stability. The Jacobian matrix at $E_{01}$ is given by

\[
J = \begin{bmatrix}
-(\nu_L + \mu_L) & r b & 0 \\
\nu_L & -\mu_A & 0 \\
0 & 0 & k(r_d - r_c)
\end{bmatrix}
\]

and the resulting characteristic equation is given by \(\det(J - \lambda I) = 0\), where

\[
J - \lambda I = \begin{bmatrix}
-(\nu_L + \mu_L) - \lambda & r b & 0 \\
\nu_L & -\mu_A - \lambda & 0 \\
0 & 0 & k(r_d - r_c) - \lambda
\end{bmatrix}.
\]

The determinant of this matrix can be calculated by expanding along the third row, i.e.,

$$
\begin{aligned}
\operatorname{det}(J-\lambda I) & =(k(r_d - r_c)-\lambda)  \operatorname{det}\left[\begin{array}{cc}
-\left(\nu_L+\mu_L\right)-\lambda & r b \\
\nu_L & -\mu_A-\lambda
\end{array}\right] \\
& =(k(r_d - r_c)-\lambda)\left(\lambda^2+\left[\left(\nu_L+\mu_L\right)+\mu_A\right] \lambda+\left[\left(\nu_L+\mu_L\right) \mu_A-r b \nu_L\right]\right).
\end{aligned}
$$

A first eigenvalue is thus given by \(\lambda_1= k(r_d - r_c)\), and the other two are roots of the quadratic polynomial, i.e., 
$$\lambda_{2,3}=\frac{-\left[(\nu_L+\mu_L)+\mu_A\right] \pm \sqrt{\left[(\nu_L+\mu_L)+\mu_A\right]^2 - 4\left[(\nu_L+\mu_L)\mu_A - r b \nu_L\right]}}{2}.$$

The eigenvalue $\lambda_1$ will be negative if $k(r_d - r_c) < 0 \Leftrightarrow r_c - r_d >0$ and the other two eigenvalues \(\lambda_{2,3}\) will have negative real parts if $$
\begin{aligned}
& \left(\nu_L+\mu_L\right) \mu_A-r b \nu_L>0 
& \Leftrightarrow\quad {N}<1.
\end{aligned}
$$

Hence, \( E_{01} \) is LAS, which establishes the stability condition stated in item 1. On the other hand, \( E_{01} \) becomes unstable if at least one eigenvalue is positive, which occurs when \( r_d - r_c > 0 \), or when \( N > 1 \).

  \item \underline{$E_{02}$:} The 
characteristic equation is given by \(\det(J - \lambda I) = 0\), where
$$J = \begin{bmatrix}
-(\nu_L + \mu_L) & r b & 0 \\
\nu_L & -\mu_A & 0 \\
0 & 0 & k(r_c - r_d)
\end{bmatrix}.$$

The determinant can be calculated by expanding along the third row, i.e.,

$\begin{aligned} \operatorname{det}(J-\lambda I) & =\left(k\left(r_c-r_d\right)-\lambda\right) \operatorname{det}\left[\begin{array}{cc}-\left(\nu_L+\mu_L\right)-\lambda & r b \\ \nu_L & -\mu_A-\lambda\end{array}\right] \\ & =\left(k\left(r_c-r_d\right)-\lambda\right)\left(\lambda^2+\left[\left(\nu_L+\mu_L\right)+\mu_A\right] \lambda+\left[\left(\nu_L+\mu_L\right) \mu_A-r b \nu_L\right]\right)\end{aligned}$

The first eigenvalue is \(\lambda_1 = k(r_c - r_d)\) and the other two eigenvalues are given by 
$$\lambda_{2,3} = \frac{-\left[(\nu_L + \mu_L) + \mu_A\right] \pm \sqrt{\left[(\nu_L + \mu_L) + \mu_A\right]^2 - 4\left[(\nu_L + \mu_L) \mu_A - r b \nu_L\right]}}{2}.$$

Since $k>0$, the eigenvalue \(\lambda_1\) will be negative if and only if $r_c - r_d<0$. Moreover, since  $(\nu_L + \mu_L) + \mu_A > 0 $,  the other two eigenvalues \(\lambda_{2,3}\) will have negative real parts if and only if
$$
\begin{aligned}
& \left(\nu_L+\mu_L\right) \mu_A-r b \nu_L>0 
& \Leftrightarrow\quad {N}<1.
\end{aligned}
$$

Thus, $E_{02}$ is LAS, as outlined in item 2, and becomes unstable when either $r_c-r_d>0$ or $N>1$.

 \item \underline{$E_{03}$:}  The Jacobian matrix at $E_{03}$ is given by

\[
J = \begin{bmatrix}
-\left(\frac{r b \nu_L}{\mu_A} \left(1 - \frac{1}{{N}}\right) + \nu_L + \mu_L\right) & \frac{r b}{{N}} & -r b \frac{\nu_L}{\mu_A} \left(1 - \frac{1}{{N}}\right)^2 \left(K_{v_{\max}} - K_{v_{\min}}\right) \\
\nu_L & -\mu_A & 0 \\
0 & 0 & k(r_d - r_c)
\end{bmatrix}
\]

and the characteristic equation is  \(\det(J - \lambda I) = 0\), where

\[
J - \lambda I = \begin{bmatrix}
-\left(\frac{r b \nu_L}{\mu_A} \left(1 - \frac{1}{{N}}\right) + \nu_L + \mu_L\right) - \lambda & \frac{r b}{{N}} & -r b \frac{\nu_L}{\mu_A} \left(1 - \frac{1}{{N}}\right)^2 \left(K_{v_{\max}} - K_{v_{\min}}\right) \\
\nu_L & -\mu_A - \lambda & 0 \\
0 & 0 & k(r_d - r_c) - \lambda
\end{bmatrix}.
\]

The determinant of this matrix can be calculated by expanding along the third row, i.e.,

$$
\begin{aligned}
\operatorname{det}(J-\lambda I)&=[k(r_d - r_c)-\lambda] \operatorname{det}\left[\begin{array}{cc}
-\left(\frac{r b \nu_L}{\mu_A}\left(1-\frac{1}{{N}}\right)+\nu_L+\mu_L\right)-\lambda & \frac{r b}{{N}} \\
\nu_L & -\mu_A-\lambda
\end{array}\right] \\
& =[k(r_d - r_c)-\lambda] \\
& {\left[\lambda^2+\left(\left(\frac{r b \nu_L}{\mu_A}\left(1-\frac{1}{{N}}\right)+\nu_L+\mu_L\right)+\mu_A\right) \lambda+\left(\left(\frac{r b \nu_L}{\mu_A}\left(1-\frac{1}{{N}}\right)+\nu_L+\mu_L\right) \mu_A-\frac{r b \nu_L}{{N}}\right)\right]}.
\end{aligned}
$$

The first eigenvalue is given by \(\lambda_1 = k(r_d - r_c)\) and the other two eigenvalues are given by $$
\begin{aligned}
& \lambda_{2,3}=\frac{-\left(\left(\frac{r b \nu_L}{\mu_A}\left(1-\frac{1}{{N}}\right)+\nu_L+\mu_L\right)+\mu_A\right)}{2} \pm \\
& \frac{\sqrt{\left(\left(\frac{r b \nu_L}{\mu_A}\left(1-\frac{1}{{N}}\right)+\nu_L+\mu_L\right)+\mu_A\right)^2-4\left(\left(\frac{r b \nu_L}{\mu_A}\left(1-\frac{1}{{N}}\right)+\nu_L+\mu_L\right) \mu_A-\frac{r b \nu_L}{{N}}\right)}}{2}.
\end{aligned}
$$

The eigenvalue \(\lambda_1\) will be negative if and only if  $r_c - r_d > 0$. The eigenvalues \(\lambda_{2,3}\) will have negative real parts if and only if 

$$
\begin{aligned}
& \left(\frac{r b \nu_L}{\mu_A}\left(1-\frac{1}{{N}}\right)+\nu_L+\mu_L\right) \mu_A-\frac{r b \nu_L}{{N}}>0 \\
& \Leftrightarrow \quad\left(1-\frac{2}{{N}}\right)>-\frac{\left(\nu_L+\mu_L\right) \mu_A}{r b \nu_L} = \frac{1}{{N}} \\
\end{aligned}
$$

which is true if $N>1$ (the existence condition for $E_{03}$). Therefore, $E_{03}$ is LAS, confirming the stability conditions stated in item 3. Conversely, $E_{03}$ is unstable if $r_d-r_c>0$.

  \item \underline{$E_{04}$:} The Jacobian matrix at $E_{04}$ is given by

\[
J = \begin{bmatrix}
-\left(\frac{r b \nu_L}{\mu_A} \left(1 - \frac{1}{{N}}\right) + \nu_L + \mu_L\right) & \frac{r b}{{N}} & -r b \frac{\nu_L}{\mu_A} \left(1 - \frac{1}{{N}}\right)^2 \left(K_{v_{\max}} - K_{v_{\min}}\right) \\
\nu_L & -\mu_A & 0 \\
0 & 0 & k(r_c - r_d)
\end{bmatrix}
\]

and the characteristic equation becomes \(\det(J - \lambda I) = 0 \), where

\[
J - \lambda I = \begin{bmatrix}
-\left(\frac{r b \nu_L}{\mu_A} \left(1 - \frac{1}{{N}}\right) + \nu_L + \mu_L\right) - \lambda & \frac{r b}{{N}} & -r b \frac{\nu_L}{\mu_A} \left(1 - \frac{1}{{N}}\right)^2 \left(K_{v_{\max}} - K_{v_{\min}}\right) \\
\nu_L & -\mu_A - \lambda & 0 \\
0 & 0 & k(r_c - r_d) - \lambda
\end{bmatrix}.
\]

The determinant of this matrix can be calculated by expanding along the third row, i.e.,

$$
\operatorname{det}(J-\lambda I)=[k(r_c - r_d)-\lambda] \operatorname{det}\left[\begin{array}{cc}
-\left(\frac{r b \nu_L}{\mu_A}\left(1-\frac{1}{{N}}\right)+\nu_L+\mu_L\right)-\lambda & \frac{r b}{{N}} \\
\nu_L & -\mu_A-\lambda
\end{array}\right]
$$
 
which yields 
$$
\begin{aligned}
& \operatorname{det}(J-\lambda I) =[k(r_c - r_d)-\lambda]  \ \cdot \  \\ 
& {\left[\lambda^2+\left(\left(\frac{r b \nu_L}{\mu_A}\left(1-\frac{1}{{N}}\right)+\nu_L+\mu_L\right)+\mu_A\right) \lambda+\left(\left(\frac{r b \nu_L}{\mu_A}\left(1-\frac{1}{{N}}\right)+\nu_L+\mu_L\right) \mu_A-\frac{r b \nu_L}{{N}}\right)\right]}.
\end{aligned}
$$

The first eigenvalue is given by \(\lambda_1 = k(r_c - r_d)\) and the other two eigenvalues are given by 

$$\begin{aligned}
& \lambda_{2,3}=\frac{-\left(\left(\frac{r b \nu_L}{\mu_A}\left(1-\frac{1}{{N}}\right)+\nu_L+\mu_L\right)+\mu_A\right)}{2} \pm \\
& \frac{\sqrt{\left(\left(\frac{r b \nu_L}{\mu_A}\left(1-\frac{1}{{N}}\right)+\nu_L+\mu_L\right)+\mu_A\right)^2-4\left(\left(\frac{r b \nu_L}{\mu_A}\left(1-\frac{1}{{N}}\right)+\nu_L+\mu_L\right) \mu_A-\frac{r b \nu_L}{{N}}\right)}}{2}.
\end{aligned}
$$

Now the eigenvalue \(\lambda_1\) will be negative if and only if $r_c - r_d<0$ and the other two eigenvalues \(\lambda_{2,3}\) will have negative real parts if

$$
\begin{aligned}
& \left(\frac{r b \nu_L}{\mu_A}\left(1-\frac{1}{{N}}\right)+\nu_L+\mu_L\right) \mu_A-\frac{r b \nu_L}{{N}}>0
\end{aligned}
$$

which is the same condition obtained for $E_{03}$. Since $N>1$ is also an existence condition for $E_{04}$, the analysis confirms that $E_{04}$ is LAS, establishing the stability conditions stated in item 4. On the other hand, $E_{04}$ loses stability if $r_c-r_d >0$.

\end{itemize}

\end{proof}

 The results from Proposition~\ref{prop: Baseline Model} and Theorem~\ref{thm: Baseline Model} are summarized in Table~\ref{tab:LAS_summary_rigorous}. The basic offspring number $N$ and the difference between perceived risks $r_c - r_d$ delineate four parameter regimes, each characterized by a distinct LAS equilibrium, while the remaining steady states are unstable. These findings are biologically interpretable: $N$ and $r_c - r_d$ are independent quantities, and $N > 1$ is known to be the necessary and sufficient condition for the mosquito-free steady states to lose stability \cite{dumont_mosquito_control}. The dynamics of the proportion of households performing control, $w$, are decoupled from the mosquito population dynamics in this simplified model. Specifically, $w \to 1$ if and only if $r_c - r_d < 0$ (that is, $r_d > r_c$), meaning that the perceived risk of \emph{not} performing breeding site control exceeds that of performing control.

\begin{table}[ht]
\centering
\renewcommand{\arraystretch}{1.7}
\begin{tabular}{|c|c|c|}
\hline
\textbf{Parameter Regimes} & \boldmath{$N < 1$} & \boldmath{$N > 1$} \\
\hline
\boldmath{$r_c - r_d > 0$} 
& \begin{tabular}[c]{@{}l@{}} 
 $E_{01}$ is LAS. \\
 $E_{02}$ is unstable. \\
 $E_{03}$ and $E_{04}$ do not exist.
\end{tabular}
& \begin{tabular}[c]{@{}l@{}} 
 $E_{03}$ is LAS. \\
 $E_{01}, E_{02}, E_{04}$ are unstable.
\end{tabular} \\
\hline
\boldmath{$r_c - r_d < 0$} 
& \begin{tabular}[c]{@{}l@{}} 
$E_{01}$ is unstable. \\
 $E_{02}$ is LAS. \\
 $E_{03}$ and $E_{04}$ do not exist.
\end{tabular}
& \begin{tabular}[c]{@{}l@{}} 
$E_{04}$ is LAS. \\
 $E_{01}, E_{02}, E_{03}$ are unstable.
\end{tabular} \\
\hline
\end{tabular}
\vspace{0.3em} % optional spacing
\caption{ Steady states of model~\eqref{eq:simplified_model_1} and their local stability under varying basic offspring number $N$ and the behavioral payoff difference $r_c - r_d$.}
\label{tab:LAS_summary_rigorous}
\end{table}

\color{black}
\noindent
\begin{remark}
The steady state and stability analysis of model~\eqref{eq:simplified_model_1}  extends to the case with constant public health interventions, as considered by Asfaw et al. \cite{asfaw_itn_control}. Formally, we may let $w$ evolve according to
\begin{equation*}
\frac{dw}{dt} = kw(1 - w)(-r_c + r_d) + \gamma(1 - w),
\end{equation*}
where $\gamma>0$ measures the effectiveness of such interventions. This modified system also admits four steady states, with local stability determined by critical thresholds for $r_c - r_d$ and $N$. A key difference is that $w$ can take the equilibrium value
$w^* = \frac{\gamma}{k (r_c - r_d)}$
if $ r_c-r_d \geq  \frac{\gamma}{k} $ (which ensures $0\leq w^* \leq 1$), indicating that sustained public health action can maintain a positive fraction of households performing breeding site control. A detailed analysis of this model is presented in the supplementary material.
\color{black}
\end{remark}

\subsection{The full game-theoretic model}
\label{full_GTM}
We analyze the proposed Equations~\eqref{eq:behavioral_entomological_system}, which include the perceived payoff for households not performing breeding site control given by the linear function $f_d(A_v) = - r_d m A_v$.
In Proposition \ref{prop: Game-theoretic Model}, we characterize the steady states of the system. The proof can be found in the supplementary material.

\begin{proposition}\label{prop: Game-theoretic Model}
The steady states $\left(L_v^*, A_v^*, w^*\right)$ for System~\eqref{eq:behavioral_entomological_system} consist of $E_{01}$, $E_{02}$, $E_{03}$, and $E_{04}$ characterized in Proposition \ref{prop: Baseline Model}, along with  $E_{05}$ representing a mosquito-positive state with partial breeding site control, given by
\begin{equation}
\left(L_v^*, A_v^*, w^*\right)=\left(\frac{\mu_A r_c}{r_dm\nu_L}, \frac{r_c}{r_dm}, \frac{K_{v_{\max }}-\frac{\mu_A r_c}{r_dm\nu_L\left(1-\frac{1}{N}\right)}}{K_{v_{\max }}-K_{v_{\min }}}\right)
\label{E05_formula}
\end{equation}

which exists if and only if $N>1$ and \textcolor{black}{$\frac{r_c}{r_d} \in\left[\alpha K_{v_{\min }}\left(1-\frac{1}{N}\right), \alpha K_{v_{\max }}\left(1-\frac{1}{N}\right)\right]$} where $\alpha=\frac{m \nu_L}{\mu_A}.$

\end{proposition}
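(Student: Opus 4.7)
The plan is to solve the steady-state system component by component, exploiting the factored structure of the third equation. Setting the right-hand sides of System~\eqref{eq:behavioral_entomological_system} to zero gives a system identical to that of the simplified model~\eqref{eq:simplified_model_1} except for the third equation, which now reads $kw(1-w)(-r_c + r_d m A_v^*) = 0$. I would begin by observing that this equation admits three branches: $w^* = 0$, $w^* = 1$, or $A_v^* = r_c/(r_d m)$. The first two branches decouple the behavioral variable from the mosquito equations exactly as in the simplified model, and I would therefore invoke Proposition~\ref{prop: Baseline Model} to conclude that these branches recover precisely $E_{01}, E_{02}, E_{03}, E_{04}$ together with the existence condition $N > 1$ for the mosquito-positive ones.

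The core new content is the third branch. Here I would first use the second equation $\nu_L L_v^* = \mu_A A_v^*$ to express $L_v^* = \mu_A r_c/(r_d m \nu_L)$. Substituting into the first equation and dividing through by $A_v^* > 0$ reduces the first equation to a single algebraic identity relating $A_v^*$ and $K_v(w^*)$: namely $1 - \mu_A A_v^*/(\nu_L K_v(w^*)) = 1/N$, which is just a rearrangement of the balance between aquatic recruitment and loss. Solving for $K_v(w^*)$ and inverting the linear map $w \mapsto K_{v_{\max}} - w(K_{v_{\max}} - K_{v_{\min}})$ yields the closed-form expression for $w^*$ claimed in~\eqref{E05_formula}.

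The final step, and the one I expect to require the most care, is extracting the existence conditions. Two requirements must be translated into constraints on the parameters: (i) $K_v(w^*)$ must be positive and finite, which forces $1 - 1/N > 0$, i.e.\ $N > 1$; and (ii) $w^* \in [0,1]$, which by monotonicity of the linear map $w \mapsto K_v(w)$ is equivalent to $K_{v_{\min}} \leq K_v(w^*) \leq K_{v_{\max}}$. Substituting the expression for $K_v(w^*)$ into this double inequality and rearranging yields $\alpha K_{v_{\min}}(1 - 1/N) \leq r_c/r_d \leq \alpha K_{v_{\max}}(1 - 1/N)$ with $\alpha = m\nu_L/\mu_A$, as stated. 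The main technical point is simply ensuring the direction of the inequalities is preserved throughout the algebra, which requires tracking that $K_{v_{\max}} - K_{v_{\min}} > 0$ and $1 - 1/N > 0$. Conversely, I would note that when the threshold condition on $r_c/r_d$ holds strictly, the constructed triple satisfies $0 < w^* < 1$, so $E_{05}$ is genuinely interior; the boundary cases correspond to degenerate coincidences with $E_{03}$ or $E_{04}$.
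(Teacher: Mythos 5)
Your proposal is correct and follows essentially the same route as the paper's proof: factor the behavioral equation into the branches $w^*=0$, $w^*=1$, $A_v^*=r_c/(r_d m)$, recover $E_{01}$--$E_{04}$ on the first two branches, and on the third branch use $L_v^*=\mu_A A_v^*/\nu_L$ to solve the first equation for $K_v(w^*)=\frac{\mu_A r_c}{r_d m \nu_L(1-1/N)}$, invert the linear map to get $w^*$, and translate $w^*\in[0,1]$ into the stated interval for $r_c/r_d$. The only cosmetic differences are that you cite Proposition~\ref{prop: Baseline Model} where the paper re-derives those four equilibria, and you phrase the interval condition via monotonicity of $K_v(\cdot)$ rather than direct rearrangement.
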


 Here, the interval for {\( \frac{r_c}{r_d}\)} ensures that \( w^* \in [0,1] \). Remarkably, \( E_{05} \) depends not only on the entomological parameters but also on the perceived risks \( r_c \), \( r_d \), and the sensitivity parameter \( m \), reflecting a behavioral component that is absent from \( E_{01} \) through \( E_{04} \). Theorem~\ref{thm: Game-theoretic Model} summarizes the stability conditions for equilibria \( E_{01} \) through \( E_{04} \) (see the supplementary material for a proof).

\begin{theorem}\label{thm: Game-theoretic Model}
The local stability of $E_{01}$ -- $E_{04}$ for System~\eqref{eq:behavioral_entomological_system} is characterized as follows:
\begin{enumerate}
 \item \( E_{01} \) is LAS if \( N < 1 \) and unstable if \( N > 1 \).
    \item \( E_{02} \) is unstable.
    
    \item $E_{03}$ is LAS if  $\frac{r_c}{r_d} > \alpha K_{v_{\max}}\left(1 - \frac{1}{N}\right)$ and unstable if $\frac{r_c}{r_d} < \alpha K_{v_{\max}}\left(1 - \frac{1}{N}\right).$

    \item $E_{04}$ is LAS if  $\frac{r_c}{r_d} < \alpha K_{v_{\min}}\left(1 - \frac{1}{N}\right)$ and unstable if $\frac{r_c}{r_d} > \alpha K_{v_{\min}}\left(1 - \frac{1}{N}\right).$

\end{enumerate}
\end{theorem}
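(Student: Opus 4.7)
The plan is to linearize System~\eqref{eq:behavioral_entomological_system} at each of the four equilibria $E_{01}$--$E_{04}$ and exploit the fact that all of them satisfy $w^*\in\{0,1\}$. Compared with the Jacobian~\eqref{jacobian_model_1} from the simplified model, the only new entry is $\partial_{A_v}(\dot w)=k w(1-w)r_d m$, which vanishes at $w^*\in\{0,1\}$; the entry $\partial_{L_v}(\dot w)=0$ is unchanged. Hence at each of $E_{01}$--$E_{04}$ the Jacobian has the block-triangular form
\[
J=\begin{pmatrix} \text{(upper-left $2\times 2$ block)} & * \\ 0\;\;0 & J_{33}\end{pmatrix},\qquad J_{33}=k(1-2w^*)\bigl(-r_c+r_d m A_v^*\bigr),
\]
so its spectrum consists of $J_{33}$ together with the two eigenvalues of the upper-left block that governs $(L_v,A_v)$.

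For the upper-left block I would reuse the computations already performed for the simpler model. At $E_{01}$ and $E_{02}$ the block is identical to the one treated in the proof of Theorem~\ref{thm: Baseline Model}, with characteristic polynomial $\lambda^2+[(\nu_L+\mu_L)+\mu_A]\lambda+[(\nu_L+\mu_L)\mu_A-rb\nu_L]$, so both eigenvalues have negative real parts iff $N<1$. At $E_{03}$ and $E_{04}$ the block coincides with the upper-left $2\times 2$ blocks computed in the same proof; using the identity $(\nu_L+\mu_L)\mu_A=rb\nu_L/N$, the constant term collapses to $rb\nu_L(1-1/N)$, which is positive whenever $N>1$, i.e.\ precisely under the existence condition for these equilibria. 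Consequently, at $E_{03}$ and $E_{04}$ the $2\times 2$ block never contributes a destabilizing eigenvalue, and stability is entirely determined by the sign of $J_{33}$.

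It then remains to evaluate $J_{33}$ at each equilibrium. At $E_{01}$ ($w^*=0,A_v^*=0$), $J_{33}=-kr_c<0$, so stability reduces to $N<1$, giving item 1. At $E_{02}$ ($w^*=1,A_v^*=0$), $J_{33}=(1-2)(-kr_c)=+kr_c>0$ unconditionally, yielding instability as in item 2. At $E_{03}$ ($w^*=0$), using $A_v^*=\tfrac{\nu_L}{\mu_A}K_{v_{\max}}(1-1/N)$ and $\alpha=m\nu_L/\mu_A$, one has $J_{33}=k\bigl(-r_c+r_d\alpha K_{v_{\max}}(1-1/N)\bigr)$, which is negative iff $r_c/r_d>\alpha K_{v_{\max}}(1-1/N)$; combined with $N>1$ this yields item 3. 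At $E_{04}$ ($w^*=1$) the factor $(1-2w^*)$ flips sign, and with $A_v^*=\tfrac{\nu_L}{\mu_A}K_{v_{\min}}(1-1/N)$ one gets $J_{33}=k\bigl(r_c-r_d\alpha K_{v_{\min}}(1-1/N)\bigr)$, yielding the threshold in item 4.

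The computations are essentially routine; the main subtlety I expect is bookkeeping the two sign flips in $J_{33}$ — one from $(1-2w^*)$ and one from the comparison between $r_c$ and $r_d m A_v^*$ — which must be tracked carefully to obtain the correct direction of the inequalities $r_c/r_d \gtrless \alpha K_{v_{\cdot}}(1-1/N)$ in items 3 and 4. A secondary check is that the off-diagonal entries hidden in $*$ genuinely do not affect the spectrum, which is guaranteed by the block-triangular structure and therefore requires no further work.
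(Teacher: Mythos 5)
Your proposal is correct and follows essentially the same route as the paper's proof: the paper also evaluates the Jacobian at each of $E_{01}$--$E_{04}$, notes that the $(3,2)$ entry $k w(1-w) r_d m$ vanishes when $w^*\in\{0,1\}$, expands the characteristic determinant along the third row (equivalently, uses the block-triangular structure), reuses the $2\times 2$ block analysis from the constant-payoff model, and reads off the thresholds from the sign of $J_{33}=k(1-2w^*)(-r_c+r_d m A_v^*)$. Your sign bookkeeping at all four equilibria matches the paper's conclusions exactly.
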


The stability conditions for \( E_{05} \) require a separate analysis. In short, we obtain a cubic characteristic equation 
\begin{equation}
\label{eq:cubic}
\lambda^3 + a_2 \lambda^2 + a_1 \lambda + a_0 = 0
\end{equation}
which does not admit a simple factorization. The Routh-Hurwitz criterion \cite{ogata2009modern}  guarantees that $E_{05}$ is LAS if and only if the following conditions are satisfied: 
\begin{equation*}
\ a_2 > 0, \ a_1 > 0, \ a_0 > 0 \quad \text{and} \quad  a_2 a_1 > a_0.
\end{equation*}

Here the coefficients are given by

$$\quad a_2=\frac{rb\nu_L \left(1-\frac{1}{N}\right)}{\mu_A}+\left(\nu_L+\mu_L+\mu_A\right), \qquad a_1=rb\nu_L\left(1-\frac{1}{N}\right) $$

and 
$$ a_0=P\left[\left(\alpha\left(1-\frac{1}{N}\right)K_{v_{\max }}-\frac{r_c}{r_d}\right)\left(-\alpha\left(1-\frac{1}{N}\right)K_{v_{\min }}+\frac{r_c}{r_d}\right)\right]$$ with $P=\frac{krbr_d\mu_A}{m\left(K_{v_{\max }}-K_{v_{\min }}\right)}$.  The first two conditions $a_2>0$ and $a_1>0$ are clearly satisfied, given that $N>1$. Moreover, the condition $a_0>0$ is equivalent to $\alpha K_{v_{\min }}\left(1-\frac{1}{N}\right) \leq \frac{r_c}{r_d} \leq \alpha K_{v_{\max }}\left(1-\frac{1}{N}\right)$, which is required for the existence of the steady state $E_{05}$. 
\color{black}
For the condition $a_2 a_1 > a_0$, we can see that 
\[
a_2 a_1 - a_0 = \left(\frac{r_c}{r_d}\right)^2 
- \alpha \left(1 - \frac{1}{N}\right) \left(K_{v_{\max}} + K_{v_{\min}}\right) \left(\frac{r_c}{r_d}\right) 
+ \alpha^2 \left(1 - \frac{1}{N}\right)^2 K_{v_{\max}} K_{v_{\min}} 
+ Q \alpha \left(K_{v_{\max}} - K_{v_{\min}}\right)\left(1 - \frac{1}{N}\right)
\]

and thus $a_2 a_1 - a_0$ takes the quadratic form   $f(x) = x^2 - Bx + C$, with $ x = \frac{r_c}{r_d}$ and  coefficients given by
\begin{equation} 
\label{Bexpression}
    B = \alpha \left(1 - \frac{1}{N}\right) \left(K_{v_{\max}} + K_{v_{\min}}\right), 
\end{equation}
and 
\begin{equation} 
\label{Cexpression}
    C = \alpha^2 \left(1 - \frac{1}{N}\right)^2 K_{v_{\max}} K_{v_{\min}} 
    + Q \alpha \left(K_{v_{\max}} - K_{v_{\min}}\right) \left(1 - \frac{1}{N}\right), \quad \text{where} \ Q = \tfrac{\mu_A^2 + r b \nu_L}{k \mu_A r_d}.
\end{equation}  

Hence, the inequality $a_2 a_1 > a_0$ is equivalent to $f(x) > 0$. Theorem~\ref{thm: Game-theoretic Model_} summarizes this result (see supplementary material for the detailed proof).

\color{black}

\begin{theorem}\label{thm: Game-theoretic Model_}

Let $E_{05}$ be given by Equation~\eqref{E05_formula} and   $f$ be the quadratic polynomial $f(x) = x^2 - Bx + C$, where the coefficients \( B \) and \( C \) are given by Equations ~\eqref{Bexpression} and ~\eqref{Cexpression}. Then, the following conditions hold:

\begin{enumerate}

\item[{(i)}] If $B^2-4 C<0$, then $E_{05}$ is LAS within the interval: 
$\alpha\left(1-\frac{1}{N}\right) K_{v_{\min }}<\frac{r_c}{r_d}<\alpha\left(1-\frac{1}{N}\right) K_{v_{\max }}$.

\item[{(ii)}] If \( B^2 - 4C \geq 0 \)
then $E_{05}$ is LAS if
\[
\alpha \left(1 - \frac{1}{N} \right) K_{v_{\min}} < \frac{r_c}{r_d} < x_1 \quad 
\text{or} \quad
x_2 < \frac{r_c}{r_d} < \alpha \left(1 - \frac{1}{N} \right) K_{v_{\max}}.
\]

where $x_1$ and $x_2$ are the real roots of $f(x).$

\end{enumerate}
\end{theorem}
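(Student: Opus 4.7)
The plan is to apply the Routh–Hurwitz criterion to the cubic characteristic equation~\eqref{eq:cubic} and reduce local asymptotic stability of $E_{05}$ to a sign analysis of the quadratic $f$. The criterion requires $a_2>0$, $a_1>0$, $a_0>0$, and $a_2 a_1 > a_0$. From the discussion preceding the theorem statement, the first two inequalities hold automatically whenever $N>1$, and $a_0>0$ is equivalent to the existence condition $\alpha(1-1/N)K_{v_{\min}} < r_c/r_d < \alpha(1-1/N)K_{v_{\max}}$. Hence, the only remaining question is when the fourth inequality holds along the existence interval.

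First, I would verify the identity $a_2 a_1 - a_0 = f(x)$ with $x = r_c/r_d$ and coefficients $B$, $C$ as in~\eqref{Bexpression} and~\eqref{Cexpression}. This follows by substituting the stated expressions for $a_0$, $a_1$, $a_2$ and expanding; the purely entomological part of $a_2 a_1$ contributes the additive term $Q\,\alpha(K_{v_{\max}} - K_{v_{\min}})(1-1/N)$ in $C$, while the quadratic factor in $a_0$ accounts for the $-Bx$ and $x^2$ terms. I would relegate this bookkeeping to the supplementary material.

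With the reduction in hand, the remainder is elementary. For part (i), if $B^2 - 4C < 0$, the polynomial $f$ has no real roots and, being monic with positive leading coefficient, is strictly positive on $\mathbb{R}$. Thus $a_2 a_1 > a_0$ holds throughout the existence interval, so all four Routh–Hurwitz inequalities are satisfied and $E_{05}$ is LAS. For part (ii), if $B^2 - 4C \geq 0$, then $f(x) = (x-x_1)(x-x_2)$ with real roots $x_1 \leq x_2$, and $f(x) > 0$ precisely when $x < x_1$ or $x > x_2$. Intersecting these two regions with the existence interval produces the two sub-intervals in the theorem statement, on which all four Routh–Hurwitz conditions hold simultaneously.

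The main technical obstacle is the algebraic collapse in the second paragraph: showing that $a_2 a_1 - a_0$, which a priori is a polynomial in $x$ with complicated coefficients mixing the entomological parameters and the imitation rate $k$, reorganizes into the clean monic quadratic $x^2 - Bx + C$ with exactly the factor $Q = (\mu_A^2 + rb\nu_L)/(k\mu_A r_d)$ appearing in $C$. Once that identity is established, the sign analysis reduces to the standard fact that a monic quadratic is positive outside the closed interval between its real roots, and everywhere when no real roots exist.
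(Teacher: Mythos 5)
Your proposal is correct and follows essentially the same route as the paper: Routh--Hurwitz applied to the cubic characteristic equation, the algebraic identity $a_2 a_1 - a_0 = f(r_c/r_d)$ with the stated $B$ and $C$, and a discriminant case split on the monic quadratic. The only step the paper carries out that you omit is the verification (using $Q>0$) that the real roots satisfy $x_1 > \alpha\left(1-\tfrac{1}{N}\right)K_{v_{\min}}$ and $x_2 < \alpha\left(1-\tfrac{1}{N}\right)K_{v_{\max}}$; this is not needed for the sufficiency claim as literally stated, but it guarantees that the two sub-intervals in part (ii) are nonempty and that a genuine instability window $(x_1,x_2)$ exists inside the existence region.
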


Condition $(ii)$ in Theorem \ref{thm: Game-theoretic Model_} yields the parameter regime under which the steady state $E_{05}$ may lose stability, namely $\frac{r_c}{r_d} \in (x_1,x_2)$. In particular, for a Hopf bifurcation to occur, the cubic characteristic Equation~\eqref{eq:cubic} must admit purely imaginary roots. This condition can be achieved at the Routh–Hurwitz boundary $a_0=a_2 a_1$. In fact,  substituting $a_0=a_2 a_1$ into Equation~\eqref{eq:cubic} yields
$$
 \lambda^3+a_2 \lambda^2+a_1 \lambda+a_2 a_1=0 
\Rightarrow \left(\lambda+a_2\right)\left(\lambda^2+a_1\right)=0.
$$
The roots in this case are $\lambda_1=-a_2$ and $\lambda_{2,3}= \pm i \sqrt{a_1}$ which implies $\lambda_{2,3}= \pm i \sqrt{r b \nu_L\left(1-\frac{1}{N}\right)}.$  For the quadratic polynomial $f$, the condition $a_0 = a_2 a_1$ is equivalent to $f(x) = 0$, which yields

\[
x_{1,2} = \frac{r_c}{r_d} =  \frac{B \pm \sqrt{B^2 - 4C}}{2}.
\]

The next theorem confirms that a Hopf bifurcation occurs at $E_{05}$ under the above condition. 

\begin{theorem}[Hopf Bifurcation]
\label{hopf}
For \(B\) and \(C\) defined by Equations~\eqref{Bexpression} and \eqref{Cexpression}, such that \(B^2 - 4C> 0\), a Hopf Bifurcation occurs at \(E_{05}=\left(L_v^*, A_v^*, w^*\right)\) given by Equation~\eqref{E05_formula} if
\[
\frac{r_c}{r_d}
= \alpha \frac{K_{v_{\max}}+K_{v_{\min}}}{2}\left(1-\frac{1}{N}\right) 
\pm \frac{\sqrt{B^2-4C}}{2}.
\]

\end{theorem}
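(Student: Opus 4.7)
The plan is to verify the two hypotheses of the classical Hopf bifurcation theorem \cite{wiggins2003introduction}: (i) the Jacobian at $E_{05}$ has a simple pair of purely imaginary eigenvalues and no other eigenvalue on the imaginary axis at the critical parameter value, and (ii) the real part of the complex eigenvalue pair crosses zero transversally as the bifurcation parameter varies. Following the setup immediately preceding the theorem, I would treat $x = r_c/r_d$ as the one-parameter family of perturbations and focus on the two critical values $x^* = (B \pm \sqrt{B^2-4C})/2$, which coincide with the values asserted in the statement because $B/2 = \alpha(1 - 1/N)(K_{v_{\max}} + K_{v_{\min}})/2$.

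Hypothesis (i) is already in hand from the factorization derived just before the theorem. At $x = x^*$ the Routh--Hurwitz boundary $a_1 a_2 = a_0$ forces the characteristic polynomial to split as $(\lambda + a_2)(\lambda^2 + a_1) = 0$. Since existence of $E_{05}$ requires $N > 1$, we have $a_1 = rb\nu_L(1 - 1/N) > 0$ and $a_2 > 0$, yielding eigenvalues $\lambda_1 = -a_2 < 0$ and $\lambda_{2,3} = \pm i\sqrt{a_1}$. This supplies a simple pair of purely imaginary roots together with a real root bounded away from the imaginary axis, as required.

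For hypothesis (ii) the crucial structural observation is that the coefficients
\[
a_2 = \frac{rb\nu_L(1 - 1/N)}{\mu_A} + (\nu_L + \mu_L + \mu_A), \qquad a_1 = rb\nu_L(1 - 1/N),
\]
do not depend on $x$, while $a_0(x) = P(K_+ - x)(x - K_-)$ with $K_\pm = \alpha(1 - 1/N)K_{v_{\max/\min}}$ is an explicit quadratic in $x$. Implicit differentiation of $\lambda^3 + a_2\lambda^2 + a_1\lambda + a_0(x) = 0$ with respect to $x$, evaluated at $\lambda = i\omega$ with $\omega = \sqrt{a_1}$, gives
\[
\frac{d\lambda}{dx}\bigg|_{x = x^*} = \frac{-a_0'(x^*)}{-2a_1 + 2 i a_2 \omega},
\]
whose real part equals $a_1\, a_0'(x^*) / [2(a_1^2 + a_2^2 \omega^2)]$. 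A one-line computation gives $a_0'(x) = P(K_+ + K_- - 2x) = P(B - 2x)$, hence $a_0'(x^*) = \mp P\sqrt{B^2 - 4C}$. The assumption $B^2 - 4C > 0$ therefore yields $\operatorname{Re}(d\lambda/dx)(x^*) \neq 0$, which is the transversality condition. Invoking the Hopf bifurcation theorem at each of the two critical values finishes the proof.

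The main obstacle is more structural than conceptual: the cubic characteristic equation does not factor away from $x^*$, so one could worry that tracking the complex eigenvalue pair as $x$ varies requires a delicate perturbation argument. The saving observation is that $a_1$ and $a_2$ are independent of the bifurcation parameter, which collapses transversality to an elementary derivative of $a_0$ and sidesteps any need to control the third eigenvalue explicitly. If one additionally wanted to classify the bifurcation as sub- or supercritical, the remaining step would be a first Lyapunov coefficient computation on the center manifold, which is a separate and considerably more involved task that the stated theorem does not require.
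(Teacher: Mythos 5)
Your proof is correct and follows essentially the same route as the paper's: both exploit the factorization $(\lambda+a_2)(\lambda^2+a_1)=0$ at the Routh--Hurwitz boundary $a_0=a_2a_1$ to obtain the simple purely imaginary pair $\pm i\sqrt{a_1}$ (with the third eigenvalue $-a_2<0$), and both reduce transversality to the derivative of $a_0$ alone, since $a_1$ and $a_2$ are independent of the bifurcation parameter. The only difference is the choice of that parameter --- you vary $x=r_c/r_d$ and compute $a_0'(x^*)=\mp P\sqrt{B^2-4C}\neq 0$ directly from $a_0(x)=P(K_+-x)(x-K_-)$, whereas the paper varies the imitation rate $k$, on which $a_0$ depends linearly through $P$ so that $a_0'(k)>0$ at an interior equilibrium --- and this choice is immaterial to the validity of the argument.
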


\begin{proof} 

Here, we choose the imitation rate \(k\,\) as our bifurcation parameter. Let \(\lambda_1(k)\) and \(\lambda_2(k)\) be the two complex conjugate eigenvalues of the characteristic Equation~\eqref{eq:cubic}. If \(\frac{r_c}{r_d}
= \alpha \frac{K_{v_{\max}}+K_{v_{\min}}}{2}\left(1-\frac{1}{N}\right) 
\pm \frac{\sqrt{B^2-4C}}{2},\) then we have \(a_0=a_2a_1\) and hence, \(\mathbb{R}e(\lambda_1(k))=\mathbb{R}e(\lambda_2(k))=0\).  To prove the transversality condition, we implicitly differentiate $p(\lambda(k), k)=\lambda^3+a_2(k) \lambda^2+a_1(k) \lambda+a_0(k) \equiv 0$ with respect to $k$ and obtain $$p_\lambda(\lambda, k) \lambda^{\prime}(k)+p_k(\lambda, k)=0 \Rightarrow \lambda^{\prime}(k)=-\frac{p_k(\lambda, k)}{p_\lambda(\lambda, k)}
.$$
As noted before, the coefficients of the characteristic equation are given by

$$
a_2=\frac{r b \nu_L(1-1 / N)}{\mu_A}+\left(\nu_L+\mu_L+\mu_A\right), \quad a_1=r b \nu_L(1-1 / N),$$ 
$$\quad a_0=P\left[\left(\alpha(1-1 / N) K_{v_{\max }}-\frac{r_c}{r_d}\right)\left(-\alpha(1-1 / N) K_{v_{\min }}+\frac{r_c}{r_d}\right)\right],
$$

where $P=\frac{k r b r_d \mu_A}{m\left(K_{v_{\text {max }}}-K_{v_{\text {min }}}\right)}$.
Since only $a_0$ depends linearly on $k$ through the term $P$, we have $a_2^{\prime}(k)=a_1^{\prime}(k)=0$ and $a_0^{\prime}(k) > 0$.

Hence, $p_\lambda=3 \lambda^2+2 a_2 \lambda+a_1$, and $p_k=a_2^{\prime} \lambda^2+a_1^{\prime} \lambda+a_0^{\prime}=a_0^{\prime}(k)$. We substitute these values and write
\begin{equation} 
\label{eq:lambda-derivative}
\lambda^{\prime}(k) \;=\; -\,\frac{ a_0^{\prime}(k)}
{3\lambda^2 + 2a_2 \lambda + a_1}.
\end{equation}

Now suppose when \(k=k_c\), we get  \(a_0=a_2a_1\), or equivalently, \(\frac{r_c}{r_d}
= \alpha \frac{K_{v_{\max}}+K_{v_{\min}}}{2}\left(1-\frac{1}{N}\right) 
\pm \frac{\sqrt{B^2-4C}}{2}\).  Hence, we obtain $\lambda_{2,3}\left(k_c\right)= \pm i \omega= \pm i \sqrt{a_1}.$
By substituting these values in \eqref{eq:lambda-derivative}, we have

$$
\lambda^{\prime}\left(k_c\right)=\frac{a_0^{\prime}\left(k_c\right)}{2 a_1 \pm 2 a_2 i \sqrt{a_1}}.
$$

Taking the real part and simplifying yields

$$
\left.\frac{d}{d k} \operatorname{Real}(\lambda(k))\right|_{k=k_c}=\frac{a_0^{\prime}\left(k_c\right)}{2\left(a_1+a_2^2\right)} \neq 0,
$$

and since $a_1>0$, the denominator is positive and at $k=k_c, \,a_0^{\prime}\left(k_c\right) \neq 0$. Hence, the transversality condition holds.

\end{proof}

\color{black}
\begin{remark}
The results from Theorems \ref{thm: Game-theoretic Model_} and \ref{hopf} are connected by the representation of the quadratic function $f$ 
in terms of $\frac{r_c}{r_d}$ and~$k$ as independent variables:
\[
f\!\left(\frac{r_c}{r_d}, k\right)
= \left(\frac{r_c}{r_d}\right)^2
- B \left(\frac{r_c}{r_d}\right)
+ C(k).
\]
Here $B$ is given by Equation~\ref{Bexpression} and $C(k)$ corresponds to Equation~\eqref{Cexpression} expressed with the explicit dependence on the imitation rate $k$, i.e.,
\[
C(k)
= \alpha^2\!\left(1-\frac{1}{N}\right)^{\!2} K_{v_{\max}} K_{v_{\min}}
+ Q(k)\,\alpha\!\left(K_{v_{\max}} - K_{v_{\min}}\right)\!\left(1-\frac{1}{N}\right),
\ \text{where} \  
Q(k)
= \left(\frac{\mu_A^2 + r b \nu_L}{ \mu_A r_d} \right) \frac{1}{k}.
\]
\
Hence, $f$ is quadratic with respect to $\frac{r_c}{r_d}$ and decreasing with respect to $k$. At a bifurcation point $k = k_c$, we know that $f(r_c/r_d, k_c) = 0$. Therefore, when $k < k_c$, we have
\[
f\!\left(\frac{r_c}{r_d}, k\right)
> f\!\left(\frac{r_c}{r_d}, k_c\right) = 0,
\]
and $E_{05}$ is LAS. Conversely, for $k > k_c$, then $f\!\left(\frac{r_c}{r_d}, k\right)
< 0$ and $E_{05}$ becomes unstable.
\end{remark}

\color{black}

\section{Numerical simulations} 
\label{numerics}

We complement the stability analysis with numerical simulations of the proposed models. Numerical integration of our systems was done using MATLAB's ode45 solver (4th/5th-order Runge-Kutta-Fehlberg method). The simulations were intended to illustrate and support the theoretical results rather than provide exhaustive parameter exploration.

\subsection*{Simplified model with constant payoffs}

Numerical simulations for  System~\eqref{eq:simplified_model_1} are depicted in Figure~\ref{fig:model1}. The central colormap indicates the steady state that is achieved when the system is simulated for a range of basic offspring numbers ($N$) and perceived risk differences ($r_c-r_d$). Four representative time series from each parameter region are displayed around the colormap. Dashed lines indicate the analytically derived steady states given in Proposition~\ref{prop: Baseline Model}. The results align with our theoretical estimates. For example, the top-left rectangle (light blue) in the colormap displays the parameter region for which the steady state $E_{01}$ (``Mosquito-free, no control") is locally asymptotically stable (LAS) according to Theorem~\ref{thm: Baseline Model}. Similarly,  the bottom-left purple region corresponds to the conditions where $E_{02}$  (``Mosquito-free, full control") is LAS. In this regime, the mosquito population declines to extinction since $N<1$, while the household control proportion $w(t)$ converges to $1$ because $r_c-r_d<0$. The top and bottom-right rectangles (dark yellow and red) correspond to the case where the steady states $E_{03}$ and $E_{04}$ are LAS. In both cases, the mosquito populations converge to a positive steady state due to $N>1$ while the household control proportion $w(t)$ converges to zero or one depending on $r_c-r_d$.

\begin{figure}[H]  % change [h] to [t] for top, [b] for bottom, etc.
  \centering
  \includegraphics[width=\textwidth]{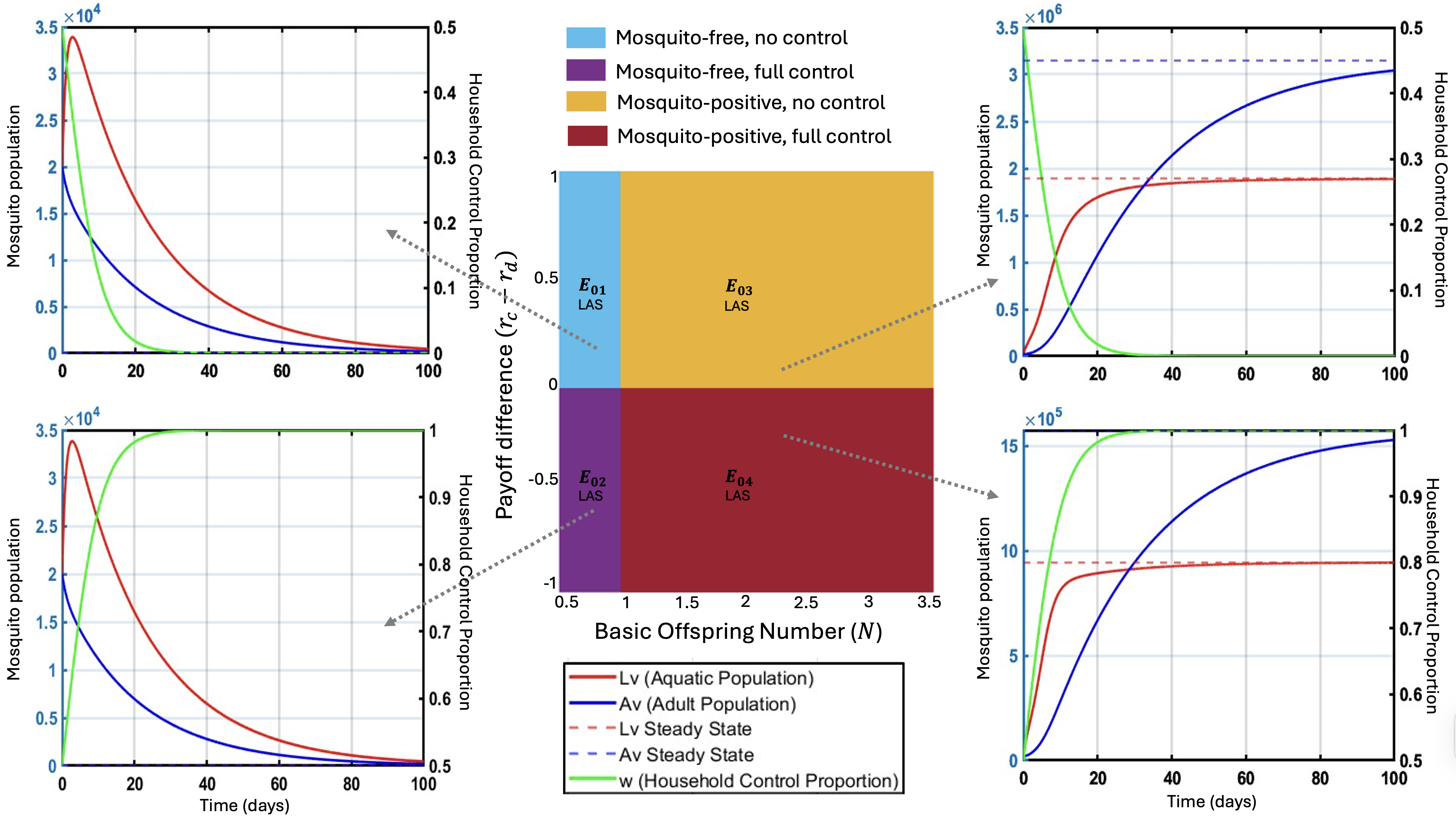}  % Adjust width as needed
  \caption{Parameter regions of stability and sample trajectories for the simplified System~\eqref{eq:simplified_model_1} with constant payoffs. The central colormap shows the four regions in which the steady states $E_{01} - E_{04}$ are LAS. For each region, we exhibit trajectories converging to the corresponding steady state. Parameter values taken from \cite{dumont_mosquito_control}: $r = 0.5$, $\nu_L   = 0.067\,\text{days}^{-1}$, $\mu_L = 0.62\,\text{days}^{-1}$, and $\mu_A =  0.04\,\text{days}^{-1}$. Other parameters set to plausible values: $K_{v_{\max}} = 2\times 10^{6}$, $K_{v_{\min}} = 1\times 10^{6}$, $t_{\text{span}} = [0,100]$ days, $k = 0.8\,\text{days}^{-1}$, and $b$ ranging from $1$ to $15$ to generate different $N$ values. Initial conditions: $L_0 = 20000$, $A_0 = 20000$, and $w_0 = 0.5$.
}
  \label{fig:model1} 
\end{figure}
\subsection*{Full game-theoretic model}

In Figure~\ref{fig3}, we present numerical simulations for the full game-theoretic model~\eqref{eq:behavioral_entomological_system}. We examine the plane $N$ vs $\frac{rc}{rd}$, which is partitioned into regions where each steady state is LAS, along with the area where $E_{05}$ is unstable. Representative trajectories illustrating convergence to a steady state or sustained oscillations are shown.  The boundaries separating the parameter regions are derived from our analytical results. The left portion of the colormap corresponds to the region $N<1$ for which the mosquito-free steady state with no breeding site control $\left(E_{01}\right)$ is LAS. The top (dark yellow) and bottom (dark red) colormap regions when $N>1$ correspond to the parameter regimes for which $E_{03}$ or $E_{04}$ are LAS, respectively. These two regions are separated by the region where the interior equilibrium $E_{05}$ exists and may be stable or unstable.  When $E_{05}$ is LAS (light yellow region), we observe damped oscillations converging to a state where mosquito populations and household control remain at intermediate levels (``mosquito-positive, partial control"). When $E_{05}$ is unstable (central off-white region), the model trajectories exhibit sustained oscillations arising through the Hopf bifurcation (Theorem~\ref{hopf}). 
 
 The light yellow region is bounded by the two curves given explicitly by
\[
  \frac{r_c}{r_d} = \alpha K_{v_{\min}}\!\left(1-\frac{1}{N}\right)
  \quad \text{and} \quad
  \frac{r_c}{r_d} = \alpha K_{v_{\max}}\!\left(1-\frac{1}{N}\right).
\]
The interior equilibrium \(E_{05}\) exists only when \(\frac{r_c}{r_d}\) lies strictly between these two boundary values (see Proposition~\ref{prop: Game-theoretic Model}).The off-white region corresponds to the parameter values for which condition (ii) in Theorem~\ref{thm: Game-theoretic Model_} is satisfied, ensuring that \(E_{05}\) is unstable. This region would not appear if \(B^2 - 4C < 0\), since in that case condition (i) of Theorem~\ref{thm: Game-theoretic Model_} would prevent any instability of \(E_{05}\).

\begin{figure}[H]  
  \centering
\includegraphics[width=\textwidth]{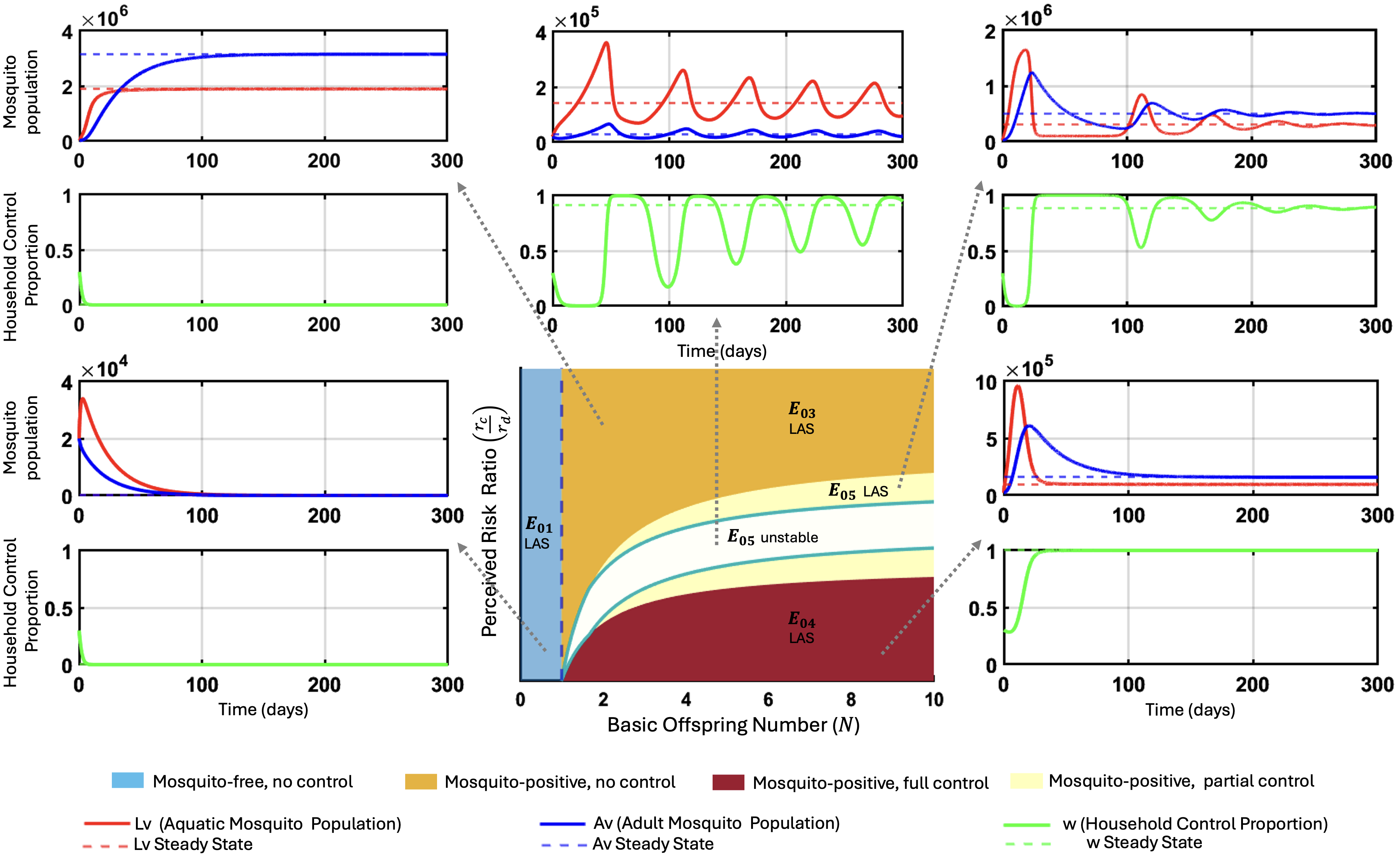} 
  \caption{Parameter regions of stability and sample trajectories of the full game-theoretic model~\eqref{eq:behavioral_entomological_system}. The colormap highlights four distinct regions where steady states are LAS and one where $E_{05}$ is unstable and oscillations emerge. For each region, corresponding system trajectories converge to a steady state or oscillate. Parameter values taken from \cite{dumont_mosquito_control}: $r = 0.5$,  $\nu_L   = 0.067\,\text{days}^{-1}$, $\mu_L = 0.62\,\text{days}^{-1}$, and $\mu_A =  0.04\,\text{days}^{-1}$. Other parameters set to plausible values: $K_{v_{\max}} = 2\times 10^{6}$, $K_{v_{\min}} = 1\times 10^{5}$, $t_{\text{span}} = [0,300]$ days, $k \in[0.5,0.8]$ days $^{-1}$, $m = 0.3\,\text{mosquito}^{-1}$, and $b$ ranging from $1$ to $15$ to generate different $N$ values. Initial conditions: $L_0 = 20000$, $A_0 = 20000$, and $w_0 = 0.3$.}  
  \label{fig3}   
\end{figure}

\textbf{Amplitude and period of oscillations}

We investigate the amplitude and period of the sustained oscillations that arise in the unstable equilibrium region for \(E_{05}\). Figure~\ref{fig4} shows a subregion of the off-white instability zone in Figure~\ref{fig3}, corresponding to basic offspring number values approximately between \(1.4\) and \(2.8\). This region is depicted in two panels colored according to the amplitude and period of oscillations in the aquatic mosquito population, computed for each pair \((N,\, r_c/r_d)\). Four representative trajectories---with periods of approximately 50, 86, 109, and 165 days---are displayed around the colormap. Larger values of \(N\) produce higher amplitudes and shorter periods, reflecting the enhanced reproductive capacity of mosquitoes and the faster response from household control. Conversely, for lower values of \(N\), oscillations have smaller amplitudes and longer periods, corresponding to a slower growth--and--control cycle between the mosquito population and household behavior. As the pair \((N,\, r_c/r_d)\) approaches either Hopf boundary, both the amplitude and the period of oscillations decrease to zero. This behavior indicates proximity to the stability threshold at which the limit cycle collapses, and trajectories return to a stable steady state. Both the amplitudes and periods were computed using MATLAB’s built-in function \emph{findpeaks}.

\begin{figure}[H]  
  \centering
  \includegraphics[width=\textwidth]{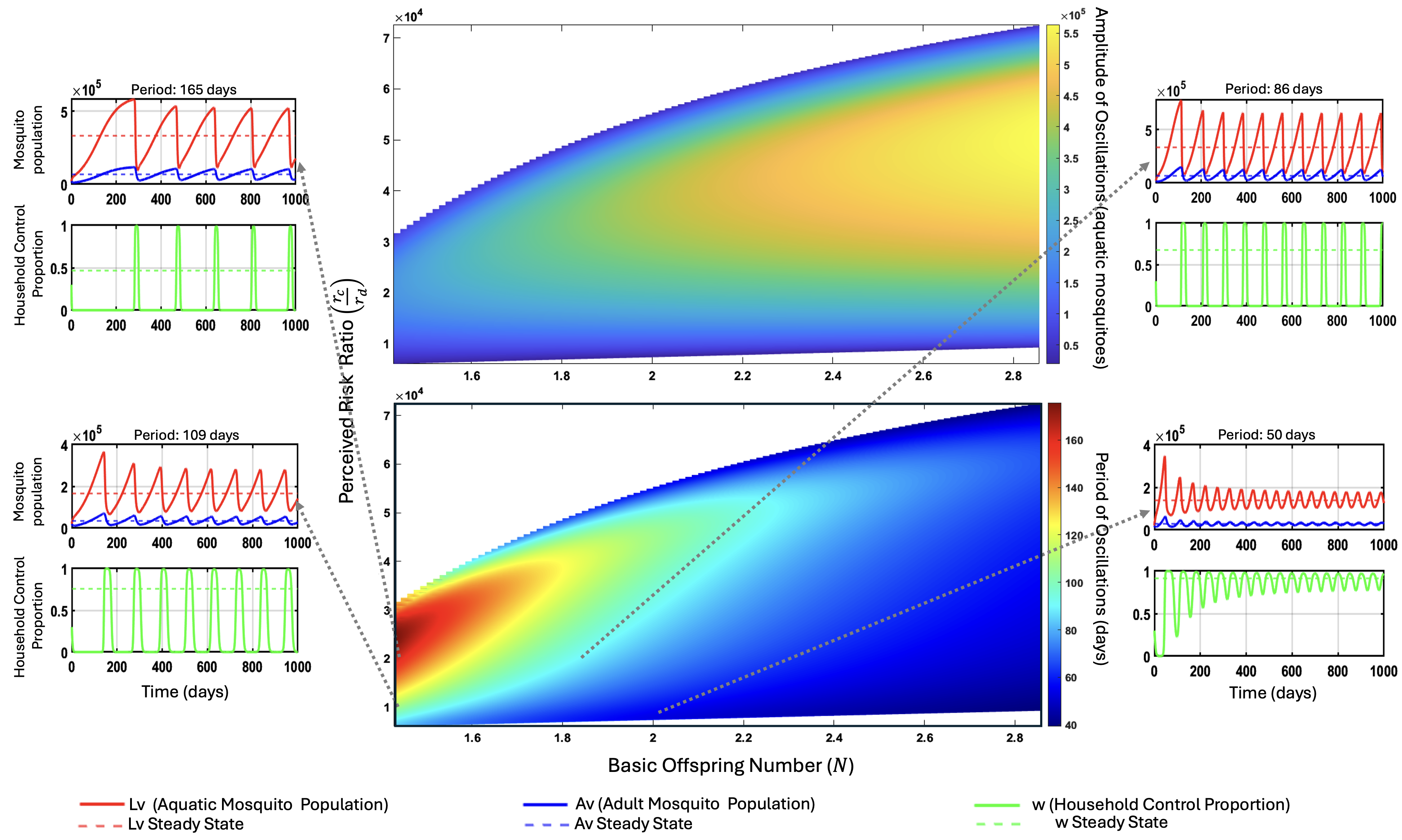}  
  \caption{Amplitude and period of sustained oscillations in mosquito populations and household behavior. Oscillations in aquatic mosquitoes ($L_v$), adult mosquitoes ($A_v$), and household mechanical control ($w$) vary with the basic offspring number $N$ and the perceived risk ratio $r_c/r_d$. Larger values of $N$ produce oscillations with higher amplitudes and shorter periods, while smaller values of $N$ result in lower amplitudes and longer periods. Parameter values: $r = 0.5$, $b$ ranging from $1$ to $15$ to generate different $N$ values,  $\nu_L   = 0.04\,\text{days}^{-1}$, $\mu_L = 0.03\,\text{days}^{-1}$, and $\mu_A =  0.2\,\text{days}^{-1}$. Other parameters set to plausible values: $K_{v_{\max}} = 2\times 10^{6}$, $K_{v_{\min}} = 1\times 10^{5}$, $t_{\text{span}} = [0,1000]$ days, $k = 0.8\,\text{days}^{-1}$, $m = 0.3\,\text{mosquito}^{-1}$. Initial conditions: $L_0 = 20000$, $A_0 = 20000$, and $w_0 = 0.3$.}  
  \label{fig4}  
\end{figure}

\section{Conclusions} 
\label{conclusions}

We developed a game-theoretic model of mosquito population dynamics under household control of breeding sites. Our framework integrates mosquito development with strategic human responses through (i) a behavior-dependent carrying capacity that linearly decreases as more households perform breeding site control ($K_{v} = K_v(w)$), and (ii) a perceived payoff for not performing control that may depend on the mosquito prevalence ($f_d = f_d(A_v)$). When such a payoff is considered constant, the behavior component of our model is decoupled from the mosquito dynamics, and the fraction of households performing control converges to either 0 or 1. Further, if the basic offspring number $N$ is greater than 1, both the aquatic and adult populations converge to positive steady states. In contrast, the full game-theoretic model with prevalence-dependent payoffs admits richer dynamics as the system undergoes a Hopf bifurcation, generating sustained oscillations in mosquito abundance and household behavior.

A feedback loop can explain the oscillatory regimes observed in the full game-theoretic model. When adult mosquito abundance increases, households intensify breeding site control, thereby reducing the carrying capacity and mosquito populations. As mosquito abundance declines, the perceived risk of not performing control also decreases, leading households to relax their efforts. This relaxation allows the mosquito population to recover, initiating a new cycle of resurgence and renewed control. Within the parameter region where oscillations are possible, their amplitude and period vary with the basic offspring number $N$ and the perceived risk ratio $r_c/r_d$ (Figure~\ref{fig4}). Larger values of $N$ produce faster mosquito population growth, eliciting a sharper but shorter-lived behavioral response. This process generates oscillations with higher amplitude and shorter periods. Conversely, as $N$ approaches $1$ from the right, mosquito growth is slower, households adjust more gradually, and the resulting oscillations have smaller amplitudes and longer periods.

Sustained oscillations in mosquito population models have been documented across a variety of frameworks. Several studies have shown that incorporating time delays or environmental drivers---such as temperature or precipitation effects on oviposition, development, or mortality---can lead to Hopf bifurcations and persistent oscillatory dynamics \cite{abdelrazec2017mathematical,okuneye2018mathematical,ngwa2010mathematical,cai2018dynamics}. Oscillations have also been reported in models that incorporate nonlinearities, such as saturation effects in sterile-mosquito release strategies \cite{cai2014dynamics,huang2019bifurcation,hussein2025residual,wang2024periodic}.  Strugarek et al.\ demonstrated that even in the absence of seasonal forcing or temperature-driven rates, a model with a larval density-mediated hatching function can generate stable oscillations \cite{strugarek2019oscillatory}. In the context of time-varying carrying capacities, as we considered in this work, Dumont and Thuilliez \cite{dumont_mosquito_control} showed that interventions may induce spiked periodic behavior through impulsive differential equations. Here we demonstrated that sustained oscillations can also emerge through a feedback mechanism typically observed in the literature of game-theoretic models \cite{bauch2005imitation, chang2020game, huang2022game}, arising from the coupling of linear functional forms of a behavior-dependent carrying capacity and a prevalence-dependent payoff.

Our study has limitations. Realistic descriptions of mosquito population dynamics require incorporating environmental factors such as temperature-dependent development and survival rates \cite{lana2018assessment} or rainfall-driven creation of larval habitats \cite{abdelrazec2017mathematical}. Extending our framework to include such weather-sensitive parameters represents a promising research direction, as the combined influence of environmental drivers and behavioral feedback may reproduce oscillatory behavior observed in empirical data. Our analysis of the Hopf bifurcation in the full game-theoretic model is restricted to local stability conditions derived from the Routh--Hurwitz criteria. A full characterization of the bifurcation requires computing the first Lyapunov coefficient using normal form theory \cite{kuznetsov1998elements, wiggins2003introduction}. While our simulations indicate that trajectories converge smoothly to stable periodic orbits, establishing their existence and stability rigorously remains a direction for future work. The model assumes a homogeneous population in which all households respond identically to perceived mosquito risk. In reality, behavioral responses vary across socioeconomic groups, neighborhoods, and levels of information access. Incorporating heterogeneity or multiple behavioral classes could alter the onset or amplitude of oscillations. A multi-patch or network formulation could reveal spatially asynchronous oscillations or localized behavioral effects.

\section{Acknowledgments} 
\label{acknowledgments}

The authors thank Haridas Das and Abdullah Al Helal for their careful feedback on this work, and Nick Belden for his proofreading of the manuscript. L.M.S. acknowledges support from the National Science Foundation under award DMS-2327844.

\bibliographystyle{unsrt}  
\bibliography{scibib}  
\newpage

\section{Supplementary material}

\subsection{Proofs of propositions and theorems}

\begin{proof}[\textbf{Proof of Proposition \ref{prop: Baseline Model}}]

By setting the behavioral equation equal to zero, we have \(w = 0\), \(w = 1\), and $r_c=r_d$. However, $r_c=r_d$ does not provide a typical steady state for \(w\). Therefore, the solutions are \(w = 0\) and \(w = 1\).

Substitute \(A_v = \frac{\nu_L}{\mu_A} L_v\) and \(K_v(w) = K_{v_{\max}} - w (K_{v_{\max}} - K_{v_{\min}})\), we obtained \[
r b \left(\frac{\nu_L}{\mu_A} L_v \right) \left(1 - \frac{L_v}{K_v(w)}\right) - (\nu_L + \mu_L) L_v = 0
\]

i.e., \[
L_v \left[\left(\frac{r b \nu_L}{\mu_A}\right) \left(1 - \frac{L_v}{K_v(w)}\right) - (\nu_L + \mu_L)\right] = 0.
\]

This gives two solutions \[L_v = 0\] or\[
\left(\frac{r b \nu_L}{\mu_A}\right) \left(1 - \frac{L_v}{K_v(w)}\right) = \nu_L + \mu_L
\]

i.e.,  $$
\begin{aligned}
L_v & =K_v(w)\left(1-\frac{\mu_A\left(\nu_L+\mu_L\right)}{r b \nu_L}\right) \\
\Rightarrow L_v & =K_v(w)\left(1-\frac{1}{N}\right) \text { if and only if } N>1 .
\end{aligned}
$$

Hence, $A_v=\frac{\nu_L}{\mu_A} L_v=\frac{\nu_L}{\mu_A} K_v(w)\left(1-\frac{1}{N}\right).$

When $w=0, L_v=0$ then we get $A_v=0.$ Moreover, the following holds:

\begin{itemize}
\item When $w=1, L_v=0$ then we get $A_v=0.$

\item When $w=0$, we get $L_v=K_v(0)\left(1-\frac{1}{N}\right)=K_{v_{\text {max }}}\left(1-\frac{1}{N}\right)$ and
$
A_v=\frac{\nu_L}{\mu_A} K_v(0)\left(1-\frac{1}{N}\right)=\frac{\nu_L}{\mu_A} K_{v_{\max }}\left(1-\frac{1}{N}\right).
$

\item When $w=1$, we get $L_v=K_v(1)\left(1-\frac{1}{N}\right)=K_{v_{\text {min }}}\left(1-\frac{1}{N}\right)$ and
$
A_v=\frac{\nu_L}{\mu_A} K_v(1)\left(1-\frac{1}{N}\right)=\frac{\nu_L}{\mu_A} K_{v_{\min }}\left(1-\frac{1}{N}\right).
$
\end{itemize}

Therefore, we obtained the four steady states given by

\begin{enumerate}
\item \underline{\text{$E_{01}$:}} $\left(L_v^*, A_v^*, w^*\right)=(0,0,0),$

 \item \underline{\text{$E_{02}$:}} $\left(L_v^*, A_v^*, w^*\right)=(0,0,1),$

 \item \underline{\text{$E_{03}$:}} $\left(L_v^*, A_v^*, w^*\right)=\left(K_{v_{\text {max }}}\left(1-\frac{1}{N}\right), \frac{\nu_L}{\mu_A} K_{v_{\text {max }}}\left(1-\frac{1}{N}\right), 0\right)$ if and only if $N>1,$

\item  \underline{\text{$E_{04}$:}} $\left(L_v^*, A_v^*, w^*\right)=\left(K_{v_{\text {min }}}\left(1-\frac{1}{N}\right), \frac{\nu_L}{\mu_A} K_{v_{\text {min }}}\left(1-\frac{1}{N}\right), 1\right)$ if and only if $N>1.$
\end{enumerate}

\end{proof}

\begin{proof} [\textbf{Proof of Proposition \ref{prop: Game-theoretic Model}}]

We have $\nu_L L_v=\mu_A A_v \Longrightarrow A_v=\frac{\nu_L}{\mu_A} L_v$ and, by setting the derivative to zero, we have $k  w(1-w)\left[-r_c+r_d  m  A_v\right]=0$, which implies,  $w=0, w=1$ and $A_v=\frac{r_c}{r_d  m}$.

By substituting $\mathrm{A}_v=\frac{\nu_L}{\mu_A} L_v$ and $w=0$, we obtain

$$
 rb\left(\frac{\nu_L}{\mu_A} L_v\right)\left(1-\frac{L_v}{K_v(0)}\right)-\left(\nu_L+\mu_L\right) L_v=0 
\Rightarrow  L_v\left[\left(\frac{r  b  \nu_L}{\mu_A}\right)\left(1-\frac{L_v}{K_{v_{\max }}}\right)-\left(\nu_L+\mu_L\right)\right]=0
$$
So, we have either $L_v=0$ or
$$
\left(\frac{rb\nu_L}{\mu_A}\right)\left(1-\frac{L_v}{K_{v_{\max }}}\right)=\left(\nu_L+\mu_L\right)
\Rightarrow \quad  L_v=K_{v_{\max }}\left(1-\frac{1}{N}\right) \text { if and only if } N>1.
$$
Again, by substituting $A_v=\frac{\nu_L}{\mu_A} L_v$ and $w=1$, we obtain two more steady states. Hence, the first four steady states are given by

\begin{enumerate}
\item \underline{\text{$E_{01}$:}} $\left(L_v^*, A_v^*, w^*\right)=(0,0,0),$ 

\item \underline{\text{$E_{02}$:}} $\left(L_v^*, A_v^*, w^*\right)=(0,0,1),$

\item \underline{\text{$E_{03}$:}} $\left(L_v^*, A_v^*, w^*\right)=\left(K_{v_{\text {max }}}\left(1-\frac{1}{{N}}\right), \frac{\nu_L}{\mu_A} K_{v_{\text {max }}}\left(1-\frac{1}{{N}}\right), 0\right)$ if and only if ${N}>1$ and

\item \underline{\text{$E_{04}$:}} $\left(L_v^*, A_v^*, w^*\right)=\left(K_{v_{\text {min }}}\left(1-\frac{1}{N}\right), \frac{\nu_L}{\mu_A} K_{v_{\min }}\left(1-\frac{1}{{N}}\right), 1\right)$ if and only if ${N}>1$.
\end{enumerate}

For $E_{05}$, we substitute  $A_v=\frac{r_c}{r_d  m}$ and $L_v=\frac{\mu_A r_c}{r_d m \nu_L}$, and obtain

$$
\begin{aligned}
& \quad r  b  \mathrm{~A}_v\left(1-\frac{L_v}{K_v(w)}\right)-\left(\nu_L+\mu_L\right) L_v=0 \\
& \Rightarrow r b \frac{r_c}{r_d m}\left(1-\frac{\mu_A r_c}{r_d m \nu_L k_v(w)}\right)-\left(\nu_L+\mu_L\right) \frac{\mu_A r_c}{r_d m \nu_L}=0 \\
& \Rightarrow 1-\frac{\mu_A  r_c}{r_d m \nu_L  k_v(w)}=\frac{\left(\nu_L+\mu_L\right) \mu_A}{\nu_L r b} \\
& \Rightarrow \frac{\mu_A  r_c}{r_d  m \nu_L k_v(w)}=1-\frac{1}{N} \\
& \Rightarrow k_v(w)=\frac{\mu_A r_c}{r_d m\nu_L\left(1-\frac{1}{N}\right)} \\
& \Rightarrow K_{v_{\max }}-w\left(K_{v_{\max }}-K_{v_{\min }}\right)=\frac{\mu_A  r_c}{r_d  m \nu_L\left(1-\frac{1}{N}\right)} \\
& \Rightarrow w=\frac{K_{v_{\max }}-\frac{\mu_A r_c}{r_d  m  \nu_L\left(1-\frac{1}{N}\right)}}{\left(K_{v_{\max }}-K_{v_{\min }}\right)} \text { if and only if} \quad N>1 \text \quad {\text{and}}
\end{aligned}
$$

$$
\begin{gathered}
0 \leq \frac{K_{v_{\max }}-\frac{\mu_A r_c}{r_d m \nu_L\left(1-\frac{1}{N}\right)}}{\left(K_{v_{\max }}-K_{v_{\min }}\right)} \leq 1 \\
\Rightarrow \alpha K_{v_{\min }}\left(1-\frac{1}{N}\right) \leq \frac{r_c}{r_d} \leq \alpha K_{v_{\max }}\left(1-\frac{1}{N}\right) \text { where } \alpha=\frac{m \nu_L}{\mu_A}.
\end{gathered}
$$

Hence, $E_{05}$ is given by

$$\left(L_v^*, A_v^*, w^*\right)=\left(\frac{\mu_A  r_c}{r_d  m \nu_L}, \frac{r_c}{r_d  m}, \frac{K_{v_{\max }}-\frac{\mu_A  r_c}{r_d  m \nu_L\left(1-\frac{1}{N}\right)}}{\left(K_{v_{\max }}-K_{v_{\min }}\right)}\right)$$ if and only if $N>1$ and
$\alpha K_{v_{\min }}\left(1-\frac{1}{N}\right) \leq \frac{r_c}{r_d} \leq \alpha K_{v_{\max }}\left(1-\frac{1}{N}\right)$ where $\alpha=\frac{m \nu_L}{\mu_A}.$

\end{proof}

\begin{proof}[\textbf{Proof of Theorem \ref{thm: Game-theoretic Model}}]

 We proceed with the analysis of each steady state separately, proving the claims from items 1--4.

\begin{itemize}

\item \underline{$E_{01}$:} We evaluate the Jacobian matrix at the steady state $E_{01}$. The Jacobian matrix at $E_{01}$ is given by

\[
J = \begin{bmatrix}
- (\nu_L + \mu_L) & r b & 0 \\
\nu_L & -\mu_A & 0 \\
0 & 0 & -k r_c
\end{bmatrix}
\]

and the characteristic equation is given by \(\det(J - \lambda I) = 0\) where 

\[
J - \lambda I = \begin{bmatrix}
- (\nu_L + \mu_L) - \lambda & r b & 0 \\
\nu_L & -\mu_A - \lambda & 0 \\
0 & 0 & -k r_c - \lambda
\end{bmatrix}.
\]

The determinant of this matrix can be calculated by expanding along the third row, i.e.,

$$ 
\begin{aligned}
\operatorname{det}(J-\lambda I) & =(-k r_c-\lambda)  \operatorname{det}\left[\begin{array}{cc}
-\left(\nu_L+\mu_L\right)-\lambda & r b \\
\nu_L & -\mu_A-\lambda
\end{array}\right] \\
& =(-k r_c-\lambda)\left(-\left(\nu_L+\mu_L\right)-\lambda\right)\left(-\mu_A-\lambda\right)-(r b)\left(\nu_L\right) \\
& =(-k r_c-\lambda)\left(\lambda^2+\left(\left(\nu_L+\mu_L\right)+\mu_A\right) \lambda+\left(\nu_L+\mu_L\right) \mu_A-r b \nu_L\right).
\end{aligned}
$$

The eigenvalues are \(\lambda_1 = -k r_c\) and 

$$
\lambda_{2,3} = \frac{-\left(\nu_L + \mu_L + \mu_A\right) \pm \sqrt{\left(\nu_L + \mu_L + \mu_A\right)^2 - 4\left((\nu_L + \mu_L) \mu_A - r b \nu_L\right)}}{2}.
$$
Since $k>0$, the eigenvalue \(\lambda_1\) will be negative if and only if \(-k r_c < 0 \Rightarrow r_c > 0\). Moreover, since  $(\nu_L + \mu_L + \mu_A) > 0 $,  the other two eigenvalues \(\lambda_{2,3}\) will have negative real parts if and only if
$$
\begin{aligned}
& \left(\nu_L+\mu_L\right) \mu_A-r b \nu_L>0 
& \Leftrightarrow\quad {N}<1.
\end{aligned}
$$
Since $r_c > 0$ is also our model assumption, the analysis confirms that $E_{01}$ is LAS, establishing the stability conditions stated in item 2. On the other hand, $E_{01}$ loses stability if $N>1$.

 \item \underline{$E_{02}$:}  We evaluate the Jacobian matrix at $E_{02}$ and analyze the associated eigenvalues. Then we determine the conditions under which the steady state is LAS. The Jacobian matrix at $E_{02}$ is given by

\[
J = \begin{bmatrix}
-(\nu_L + \mu_L) & r b & 0 \\
\nu_L & -\mu_A & 0 \\
0 & 0 & k r_c
\end{bmatrix}.
\]

Now the characteristic equation is given by \(\det(J - \lambda I) = 0\) where 

\[
J - \lambda I = \begin{bmatrix}
-(\nu_L + \mu_L) - \lambda & r b & 0 \\
\nu_L & -\mu_A - \lambda & 0 \\
0 & 0 & k r_c - \lambda
\end{bmatrix}.
\]

The determinant of this matrix can be calculated by expanding along the third row, i.e., 

$$
\begin{aligned}
\operatorname{det}(J-\lambda I) & =\left(k r_c-\lambda\right) \operatorname{det}\left[\begin{array}{cc}
-\left(\nu_L+\mu_L\right)-\lambda & r b \\
\nu_L & -\mu_A-\lambda
\end{array}\right] \\
& =\left(k r_c-\lambda\right)\left(\lambda^2+\left[\left(\nu_L+\mu_L\right)+\mu_A\right] \lambda+\left[\left(\nu_L+\mu_L\right) \mu_A-r b \nu_L\right]\right).
\end{aligned}
$$
The first eigenvalue \(\lambda_1 = k r_c\) and the other two eigenvalues 
$$
\lambda_{2,3} = \frac{-\left((\nu_L + \mu_L) + \mu_A\right) \pm \sqrt{\left((\nu_L + \mu_L) + \mu_A\right)^2 - 4\left((\nu_L + \mu_L) \mu_A - r b \nu_L\right)}}{2}.$$
 
Since $k>0$, so the eigenvalue \(\lambda_1\) will be negative if and only if \(k r_c < 0 \Leftrightarrow  r_c < 0
\) and \(\lambda_{2,3}\) will have negative real parts if and only if $$
\left(\left(\nu_L+\mu_L\right) \mu_A-r b \nu_L\right)>0 
 \Leftrightarrow \quad {N}<1.
$$
Here $r_c < 0$ is not feasible since the perceived risk is assumed to be a positive real number. Hence $E_{02}$ is always unstable.

 \item \underline{$E_{03}$:} Next, the Jacobian matrix at $E_{03}$ is given by

$$
J=\left[\begin{array}{ccc}
-\left(\frac{r b \nu_L}{\mu_A}\left(1-\frac{1}{{N}}\right)+\nu_L+\mu_L\right) & \frac{r b}{{N}} & -r b \frac{\nu_L}{\mu_A}\left(1-\frac{1}{{N}}\right)^2\left(K_{v_{\max }}-K_{v_{\text {min }}}\right) \\
\nu_L & -\mu_A & 0 \\
0 & 0 & k\left(-r_c+r_d m\left(\frac{\nu_L}{\mu_A} K_{v_{\text {max }}}\left(1-\frac{1}{{N}}\right)\right)\right.
\end{array}\right]
$$

and the characteristic equation is given by \(\det(J - \lambda I) = 0\) where

\[
%\small{ 
J-\lambda I = \left[
\begin{array}{ccc}
-\left(\frac{r b \nu_L}{\mu_A}\left(1 - \frac{1}{{N}}\right) + \nu_L + \mu_L\right) - \lambda 
& \frac{r b}{{N}} 
& -r b \frac{\nu_L}{\mu_A}\left(1 - \frac{1}{{N}}\right)^2\left(K_{v_{\max}} - K_{v_{\min}}\right) \\
\nu_L 
& -\mu_A - \lambda 
& 0 \\
0 
& 0 
& k\left(-r_c + r_d  m \left(\frac{\nu_L}{\mu_A} K_{v_{\max}}\left(1 - \frac{1}{{N}}\right)\right)\right) - \lambda
\end{array}
\right].
%}
\]

Now the determinant of this matrix can be calculated by expanding along the third row, i.e.,

$$
\begin{aligned}
& \operatorname{det}(J-\lambda I)=\left[k\left(-r_c+r_d m\left(\frac{\nu_L}{\mu_A} K_{v_{\max }}\left(1-\frac{1}{{N}}\right)\right)\right)-\lambda\right] \\
& \operatorname{det}\left[\begin{array}{cc}
-\left(\frac{r b \nu_L}{\mu_A}\left(1-\frac{1}{{N}}\right)+\nu_L+\mu_L\right)-\lambda & \frac{r b}{{N}} \\
\nu_L & -\mu_A-\lambda
\end{array}\right]
\end{aligned}
$$

$$
\begin{aligned}
&=\left[k\left(-r_c+r_d m\left(\frac{\nu_L}{\mu_A} K_{v_{\max }}\left(1-\frac{1}{{N}}\right)\right)\right)-\lambda\right]\\
& {\left[\left(\frac{r b \nu_L}{\mu_A}\left(1-\frac{1}{{N}}\right)+\nu_L+\mu_L\right)+\lambda\right]\left[\mu_A+\lambda\right]-\left[\frac{r b}{{N}}\right]\left[\nu_L\right] }
\end{aligned}
$$

$$
\begin{aligned}
= & {\left[k\left(-r_c+r_d m\left(\frac{\nu_L}{\mu_A} K_{v_{\max }}\left(1-\frac{1}{{N}}\right)\right)\right)-\lambda\right] } \\
& {\left[\left(\frac{r b \nu_L}{\mu_A}\left(1-\frac{1}{{N}}\right)+\nu_L+\mu_L\right) \mu_A+\left(\frac{r b \nu_L}{\mu_A}\left(1-\frac{1}{{N}}\right)+\nu_L+\mu_L\right) \lambda+\mu_A \lambda+\lambda^2-\frac{r b \nu_L}{{N}}\right] }
\end{aligned}
$$

$$
\begin{aligned} 
& =\left[k\left(-r_c+r_d  m\left(\frac{\nu_L}{\mu_A} K_{v_{\max }}\left(1-\frac{1}{{N}}\right)\right)\right)-\lambda\right]  \\
& {\left[\lambda^2+\left(\left(\frac{r b \nu_L}{\mu_A}\left(1-\frac{1}{{N}}\right)+\nu_L+\mu_L\right)+\mu_A\right) \lambda+\left(\left(\frac{r b \nu_L}{\mu_A}\left(1-\frac{1}{{N}}\right)+\nu_L+\mu_L\right) \mu_A-\frac{r b \nu_L}{{N}}\right)\right]}.
\end{aligned}
$$

Hence, we have eigenvalues $\lambda_1=k\left(-r_c+r_d m\left(\frac{\nu_L}{\mu_A} K_{v_{\max }}\left(1-\frac{1}{{N}}\right)\right)\right)
$ which is true if $N>1$  and

$
\begin{aligned}
& \lambda_{2,3}=\frac{-\left(\left(\frac{r b \nu_L}{\mu_A}\left(1-\frac{1}{{N}}\right)+\nu_L+\mu_L\right)+\mu_A\right)}{2} \pm \\
& \frac{\sqrt{\left(\left(\frac{r b \nu_L}{\mu_A}\left(1-\frac{1}{{N}}\right)+\nu_L+\mu_L\right)+\mu_A\right)^2-4\left(\left(\frac{r b \nu_L}{\mu_A}\left(1-\frac{1}{{N}}\right)+\nu_L+\mu_L\right) \mu_A-\frac{r b \nu_L}{{N}}\right)}}{2}.
\end{aligned}
$

Here, eigenvalue \(\lambda_1\) will be negative if and only if $$
\begin{aligned}
& k\left[-r_c+r_d m\left(\frac{\nu_L}{\mu_A} K_{v_{\max }}\left(1-\frac{1}{{N}}\right)\right)\right]<0 \\
& \Rightarrow \frac{r_c}{r_d}>\frac{m \nu_L}{\mu_A} K_{v_{\max }}\left(1-\frac{1}{{N}}\right) \\
& \Rightarrow \frac{r_c}{r_d}>\alpha K_{v_{\max }}\left(1-\frac{1}{{N}}\right) \text { where } \alpha=\frac{m \nu_L}{\mu_A}
\end{aligned}
$$

and the real parts of the eigenvalues \(\lambda_{2, 3}\) will be negative if and only if
$$
\left(\frac{r b \nu_L}{\mu_A}\left(1-\frac{1}{{N}}\right)+\nu_L+\mu_L\right) \mu_A-\frac{r b \nu_L}{{N}}>0 
 \Leftrightarrow \quad {N}>1.
$$ 
 Since $N>1$ is also an existence condition for $E_{03}$, the analysis confirms that $E_{03}$ is LAS, establishing the stability conditions stated in item 3. On the other hand, $E_{03}$ loses stability when $\frac{r_c}{r_d}<\alpha K_{v_{\max }}\left(1-\frac{1}{N}\right)$ where $\alpha=\frac{m \nu_L}{\mu_A}$.

\item \underline{$E_{04}$:} The Jacobian matrix at $E_{04}$ is given by 
 $$
J=\left[\begin{array}{ccc}
-\left(\frac{r b \nu_L}{\mu_A}\left(1-\frac{1}{{N}}\right)+\nu_L+\mu_L\right) & \frac{r b}{{N}} & -r b \frac{\nu_L}{\mu_A}\left(1-\frac{1}{{N}}\right)^2\left(K_{v_{\max }}-K_{v_{\text {min }}}\right) \\
\nu_L & -\mu_A & 0 \\
0 & 0 & -k\left(-r_c+r_d m\left(\frac{\nu_L}{\mu_A} K_{v_{\text {min }}}\left(1-\frac{1}{{N}}\right)\right)\right.
\end{array}\right].
$$

and the characteristic equation is given by \(\det(J - \lambda I) = 0\) where

\[
%\small{
J-\lambda I = \left[
\begin{array}{ccc}
-\left(\frac{r b \nu_L}{\mu_A}\left(1 - \frac{1}{{N}}\right) + \nu_L + \mu_L\right) - \lambda 
& \frac{r b}{{N}} 
& -r b \frac{\nu_L}{\mu_A}\left(1 - \frac{1}{{N}}\right)^2\left(K_{v_{\max}} - K_{v_{\min}}\right) \\
\nu_L 
& -\mu_A - \lambda 
& 0 \\
0 
& 0 
& -k\left(-r_c + r_d m \left(\frac{\nu_L}{\mu_A} K_{v_{\min}}\left(1 - \frac{1}{{N}}\right)\right)\right) - \lambda
\end{array}
\right].
%}
\]

The determinant of this matrix can be calculated by expanding along the third row, i.e.,

$$
\begin{aligned}
& \operatorname{det}(J-\lambda I)=\left[-k\left(-r_c+r_d m\left(\frac{\nu_L}{\mu_A} K_{v_{\min }}\left(1-\frac{1}{{N}}\right)\right)\right)-\lambda\right] \\
& \operatorname{det}\left[\begin{array}{cc}
-\left(\frac{r b \nu_L}{\mu_A}\left(1-\frac{1}{{N}}\right)+\nu_L+\mu_L\right)-\lambda & \frac{r b}{{N}} \\
\nu_L & -\mu_A-\lambda
\end{array}\right]
\end{aligned}
$$

$$ 
\begin{aligned}
&=\left[-k\left(-r_c+r_d m\left(\frac{\nu_L}{\mu_A} K_{v_{\min }}\left(1-\frac{1}{{N}}\right)\right)\right)-\lambda\right]. \\
& {\left[\left(\frac{r b \nu_L}{\mu_A}\left(1-\frac{1}{{N}}\right)+\nu_L+\mu_L\right)+\lambda\right]\left[\mu_A+\lambda\right]-\left[\frac{r b}{{N}}\right]\left[\nu_L\right] }
\end{aligned}
$$

$$
\begin{aligned}
= & {\left[-k\left(-r_c+r_d m\left(\frac{\nu_L}{\mu_A} K_{v_{\min }}\left(1-\frac{1}{{N}}\right)\right)\right)-\lambda\right] } \\
& {\left[\left(\frac{r b \nu_L}{\mu_A}\left(1-\frac{1}{{N}}\right)+\nu_L+\mu_L\right) \mu_A+\left(\frac{r b \nu_L}{\mu_A}\left(1-\frac{1}{{N}}\right)+\nu_L+\mu_L\right) \lambda+\mu_A \lambda+\lambda^2-\frac{r b \nu_L}{{N}}\right] }
\end{aligned}
$$

$$
\begin{aligned}
& =\left[-k\left(-r_c+r_d t m\left(\frac{\nu_L}{\mu_A} K_{v_{\min }}\left(1-\frac{1}{{N}}\right)\right)\right)-\lambda\right] \\
& {\left[\lambda^2+\left(\left(\frac{r b \nu_L}{\mu_A}\left(1-\frac{1}{{N}}\right)+\nu_L+\mu_L\right)+\mu_A\right) \lambda+\left(\left(\frac{r b \nu_L}{\mu_A}\left(1-\frac{1}{{N}}\right)+\nu_L+\mu_L\right) \mu_A-\frac{r b \nu_L}{{N}}\right)\right]}.
\end{aligned}
$$

Here, eigenvalues: $
\lambda_1=-k\left(-r_c+r_d m\left(\frac{\nu_L}{\mu_A} K_{v_{\min }}\left(1-\frac{1}{{N}}\right)\right)\right) 
$ which is true if $N>1$ and 

$$
\begin{aligned}
& \lambda_{2,3}=\frac{-\left(\left(\frac{r b \nu_L}{\mu_A}\left(1-\frac{1}{{N}}\right)+\nu_L+\mu_L\right)+\mu_A\right)}{2} \pm \\
& \frac{\sqrt{\left(\left(\frac{r b \nu_L}{\mu_A}\left(1-\frac{1}{{N}}\right)+\nu_L+\mu_L\right)+\mu_A\right)^2-4\left(\left(\frac{r b \nu_L}{\mu_A}\left(1-\frac{1}{{N}}\right)+\nu_L+\mu_L\right) \mu_A-\frac{r b \nu_L}{{N}}\right)}}{2}.
\end{aligned}
$$

For the steady state to be LAS, the eigenvalue \(\lambda_1\) will be negative if and only if
 $$
 -k\left[-r_c+r_d m\left(\frac{\nu_L}{\mu_A} K_{v_{\min }}\left(1-\frac{1}{{N}}\right)\right)\right]<0 \\
 \Leftrightarrow \frac{r_c}{r_d}<\alpha K_{v_{\min }}\left(1-\frac{1}{{N}}\right) \text {where} \quad \alpha=\frac{m\nu_L}{\mu_A}
$$
and the eigenvalues \(\lambda_{2, 3}\) will have negative real parts if and only if
$$
\left(\left(\frac{r b \nu_L}{\mu_A}\left(1-\frac{1}{{N}}\right)+\nu_L+\mu_L\right) \mu_A-\frac{r b \nu_L}{{N}}\right)>0 
\Leftrightarrow \quad {N}>1.
$$ 
 Since $N>1$ is also an existence condition for $E_{04}$, the analysis confirms that $E_{04}$ is LAS, establishing the stability conditions stated in item 4.
Conversely, $E_{04}$ will be unstable if $\frac{r_c}{r_d}>\alpha K_{v_{\min }}\left(1-\frac{1}{N}\right)$ where $\alpha=\frac{m \nu_L}{\mu_A}$.
\end{itemize}
\end{proof}

\begin{proof}[\textbf{Proof of Theorem \ref{thm: Game-theoretic Model_}}]

We will evaluate the Jacobian matrix at the given steady state

$$E_{05} =
\left(L_v, \mathrm{~A}_v, w\right)=\left(\frac{\mu_A r_c}{r_d m \nu_L}, \frac{r_c}{r_d m}, \frac{K_{v_{\max }}-\frac{\mu_A  r_c}{r_d m \nu_L\left(1-\frac{1}{N}\right)}}{\left(K_{v_{\max }}-K_{v_{\min }}\right)}\right)
$$

where $N>1$ and \textcolor{black}{$\frac{r_c}{r_d} \in\left[\alpha K_{v_{\min }}\left(1-\frac{1}{N}\right), \alpha K_{v_{\max }}\left(1-\frac{1}{N}\right)\right]$} where $\alpha=\frac{m \nu_L}{\mu_A}.$

Given Jacobian Matrix $
J=\left[\begin{array}{ccc}
-\frac{r b A_v}{K_v(w)}-\left(\nu_L+\mu_L\right) & r b\left(1-\frac{L_v}{K_v(w)}\right) & \frac{-r b A_v L_v\left(K_{v_{\max }}-K_{v_{\min }}\right)}{\left(K_v(w)\right)^2} \\
\nu_L & -\mu_A & 0 \\
0 & k w(1-w) r_d m & k\left(-r_c+r_d m A_v\right)(1-2 w)
\end{array}\right].
$

By substituting

$$
L_v  =\frac{\mu_A r_c}{r_d m \nu_L}, \quad  
A_v  =\frac{r_c}{r_d m},\quad  \text{and} \ 
w  =\frac{K_{v_{\max }}-\frac{\mu_A r_c}{r_d m \nu_L\left(1-\frac{1}{N}\right)}}{\left(K_{v_{\max }}-K_{v_{\min }}\right)},
$$
we obtain the following entries of the matrix $J$:

\begin{itemize}

\item $J_{11}:$
$$
\begin{aligned}
& J_{11}=-\frac{r b A_v}{K_v(w)}-\left(\nu_L+\mu_L\right) \\
& =-\frac{rb\frac{r_c}{r_d m}}{\frac{\mu_A r_c}{r_d  m \nu_L\left(1-\frac{1}{N}\right)}}-\left(\nu_L+\mu_L\right) \\
& =\frac{-r b\nu_L\left(1-\frac{1}{N}\right)}{\mu_A}-\left(\nu_L+\mu_L\right)
\end{aligned}
$$

\item $J_{12}:$
$$
\begin{aligned}
J_{12} & =r b \left(1-\frac{L_v}{K_v(w)}\right) \\
& =r b\left(1-\frac{\frac{\mu_A r_c}{r_d m \nu_L}}{\frac{\mu_A r_c}{r_d m \nu_L\left(1-\frac{1}{N}\right)}}\right) \\
& =\frac{rb}{N}
\end{aligned}
$$

\item $J_{13}:$
$$
\begin{aligned}
& J_{13}=\frac{-r b A_v L_v\left(K_{v_{\max }}-K_{v_{\min }}\right)}{\left(K_v(w)\right)^2} \\
& =\frac{-r b \frac{r_c}{r_d m} \frac{\mu_A r_c}{r_d m \nu_L}\left(K_{v_{\max }}-K_{v_{\min }}\right)}{\left(\frac{\mu_A r_c}{r_d m \nu_L\left(1-\frac{1}{N}\right)}\right)^2} \\
& =\frac{-r b \nu_L \left(1-\frac{1}{N}\right)^2\left(K_{v_{\max }}-K_{v_{\min }}\right)}{\mu_A}
\end{aligned}
$$

\item  $
J_{21}=v_L, \quad J_{22}=-\mu_A, \quad J_{23}=0, \quad J_{31}=0
$

\item $J_{32}:$
$$
\begin{aligned}
& J_{32}=k w (1-w) r_d m \\
& =K \left[\frac{k_{v_{\max }}-\frac{\mu_A r_c}{r_d m \nu_L\left(1-\frac{1}{N}\right)}}{k_{v_{\max }}-k_{v_{\min }}}\right] \left[1-\left\{\frac{k_{v_{\max }}-\frac{\mu_A r_c}{r_d m \nu_L\left(1-\frac{1}{N}\right)}}{k_{v_{\max }}-k_{v_{\min }}}\right\}\right] r_d m \\
& =k r_d m\left[\frac{k_{v_{\max }}-\frac{\mu_A r_c}{r_d m \nu_L\left(1-\frac{1}{N}\right)}}{k_{v_{\max }}-k_{v_{\min }}}\right]\left[\frac{-k_{v_{\min }}+\frac{\mu_A r_c}{r_d m \nu_L\left(1-\frac{1}{N}\right)}}{k_{v_{\max }}-k_{v_{\min }}}\right] \\
& =\frac{k r_d m}{\left(k_{v_{\max }}-k_{v_{\min }}\right)^2}\left[k_{v_{\max }}-\frac{\mu_A r_c}{r_d m \nu_L\left(1-\frac{1}{N}\right)}\right]\left[-k_{v_{\min }}+\frac{\mu_A r_c}{r_d m \nu_L\left(1-\frac{1}{N}\right)}\right]
\end{aligned}
$$

$$
\begin{aligned}
&\begin{aligned}
& =\frac{k r_d m}{\left(k_{v_{\max }}-k_{v_{\min }}\right)^2}\left[\frac{\left(k_{v_{\max }} r_d m \nu_L\left(1-\frac{1}{N}\right)-\mu_A r_c\right)\left(-k_{v_{\min }} r_d m \nu_L\left(1-\frac{1}{N}\right)+\mu_A r_c\right)}{\left(r_d m \nu_L\left(1-\frac{1}{N}\right)\right)^2}\right] \\
& =\frac{k r_d m}{\left(k_{v_{\max }}-k_{v_{\min }}\right)^2}\left[\frac{r_d \mu_A\left(k_{v_{\max }}\frac{m\nu_L}{\mu_A}\left(1-\frac{1}{N}\right)-\frac{r_c}{r_d}\right) r_d \mu_A\left(-k_{v_{\min }} \frac{m \nu_L}{\mu_A}\left(1-\frac{1}{N}\right)+\frac{r_c}{r_d}\right)}{\left(r_d m \nu_L\left(1-\frac{1}{N}\right)\right)^2}\right] \\
& \left.\left.=\frac{k r_d \mu_A^2}{m  \nu_L^2\left(1-\frac{1}{N}\right)^2\left(k_{v_{\max }}-k_{v_{\min }}\right)^2}\right.\right.\left[\left(\alpha \left(1-\frac{1}{N}\right) k_{v_{\max }}-\frac{r_c}{r_d}\right)\left(-\alpha \left(1-\frac{1}{N}\right) k_{v_{\min }}+\frac{r_c}{r_d}\right)\right]
\end{aligned}\\
\end{aligned}
$$

$\text{where}, \quad \frac{m {\nu}_L}{{\mu}_A} = {\alpha}.$

\item $J_{33}:$
$$
\begin{aligned}
J_{33} & =k\left(-r_c+r_d m A_v\right)(1-2 w) \\
& =k\left(-r_c+r_d m \frac{r_c}{r_d m}\right)(1-2 w)=0.
\end{aligned}
$$
\end{itemize}
Hence the Jacobian matrix  at $$
\left(L_v, \mathrm{~A}_v, w\right)=\left(\frac{\mu_A r_c}{r_d  m \nu_L}, \frac{r_c}{r_d m}, \frac{K_{v_{\max }}-\frac{\mu_A r_c}{r_d m \nu_L\left(1-\frac{1}{N}\right)}}{\left(K_{v_{\max }}-K_{v_{\min }}\right)}\right)
$$

where $N>1$ and \textcolor{black}{$\frac{r_c}{r_d} \in\left[\alpha K_{v_{\min }}\left(1-\frac{1}{N}\right), \alpha K_{v_{\max }}\left(1-\frac{1}{N}\right)\right]$}, $\alpha=\frac{m \nu_L}{\mu_A}$ is

$$
J=\left[\begin{array}{ccc}
\frac{-r b \nu_L \left(1-\frac{1}{N}\right)}{\mu_A}-\left(v_L+\mu_L\right) & \frac{r b}{N} & \frac{-r b \nu_L \left(1-\frac{1}{N}\right)^2\left(K_{v_{\max }}-K_{v_{\min }}\right)}{\mu_A} \\
\nu_L & -\mu_A & 0 \\
0 & J_{32} & 0
\end{array}\right]
$$

where, $$
J_{32}=\frac{k r_d \mu_A^2 \left[\left(\alpha \left(1-\frac{1}{N}\right) k_{v_{\max }}-\frac{r_c}{r_d}\right)\left(-\alpha \left(1-\frac{1}{N}\right) k_{v_{\min }}+\frac{r_c}{r_d}\right)\right]}{m \nu_L^2\left(1-\frac{1}{N}\right)^2\left(k_{v_{\max }}-k_{v_{\min }}\right)^2}.
$$

The characteristic equation is given by \(\det(J - \lambda I) = 0\), i.e.,

$$
J-\lambda I=\left[\begin{array}{ccc}
\frac{-r b \nu_L \left(1-\frac{1}{N}\right)}{\mu_A}-\left(\nu_L+\mu_L\right)-\lambda & \frac{r b}{N} & \frac{-r  b \nu_L \left(1-\frac{1}{N}\right)^2\left(K_{v_{\max }}-K_{v_{\min }}\right)}{\mu_A} \\
\nu_L & -\mu_A-\lambda & 0 \\
0 & J_{32} & -\lambda
\end{array}\right]
$$ 

and the determinant of this matrix can be calculated by expanding along the third row:

$$
\operatorname{det}(J-\lambda I)=-J_{32} \operatorname{det}\left(\left[\begin{array}{cc}
-\left(\frac{r b \nu_L}{\mu_A}\left(1-\frac{1}{{N}}\right)+\nu_L+\mu_L\right)-\lambda & \frac{-r  b \nu_L \left(1-\frac{1}{N}\right)^2\left(K_{v_{\max }}-K_{v_{\min }}\right)}{\mu_A} \\
\nu_L & 0
\end{array}\right]\right) 
$$

$$
-\lambda \operatorname{det}\left(\left[\begin{array}{cc}
-\left(\frac{r b \nu_L}{\mu_A}\left(1-\frac{1}{{N}}\right)+\nu_L+\mu_L\right)-\lambda & \frac{r b}{{N}} \\
\nu_L & -\mu_A-\lambda
\end{array}\right]\right)
$$

$$
\begin{array}{r}
=-\lambda \left[\left(\frac{r b \nu_L \left(1-\frac{1}{N}\right)}{\mu_A}+\left(\nu_L+\mu_L\right)+\lambda\right)\left(\mu_A+\lambda\right)-\frac{r b \nu_L}{N}\right]  \\
-J_{32} \left[\frac{r b \nu_L^2 \left(1-\frac{1}{N}\right)^2\left(k_{v_{\max }}-k_{v_{\min }}\right)}{\mu_A}\right]
\end{array}
$$

$$
=-\lambda \left[\lambda^2+\left(\frac{r b \nu_L \left(1-\frac{1}{N}\right)}{\mu_A}+\left(\nu_L+\mu_L+\mu_A\right)\right) \lambda+\left(r  b \nu_L \left(1-\frac{2}{N}\right)+\left(\nu_L+\mu_L\right) \mu_A\right)\right]
$$

$$
\begin{aligned}
-\frac{k r_d \mu_A^2}{m \nu_L^2\left(1-\frac{1}{N}\right)^2\left(k_{v_{\max }}-k_{v_{\min }}\right)^2} \frac{r b \nu_L^2 \left(1-\frac{1}{N}\right)^2\left(k_{v_{\max }}-k_{v_{\min }}\right)}{\mu_A} \\
\left[\left(\alpha \left(1-\frac{1}{N}\right) k_{v_{\max }}-\frac{r_c}{r_d}\right)\left(-\alpha \left(1-\frac{1}{N}\right) k_{v_{\min }}+\frac{r_c}{r_d}\right)\right]
\end{aligned}
$$

$$
=-\lambda \left[\lambda^2+\left(\frac{r b \nu_L \left(1-\frac{1}{N}\right)}{\mu_A}+\left(\nu_L+\mu_L+\mu_A\right)\right) \lambda+r b \nu_L \left(\left(1-\frac{2}{N}\right)+\frac{\left(\nu_L+\mu_L\right) \mu_A}{r b \nu_L}\right)\right]
$$

$$
-\frac{k r b r_d \mu_A}{m \left(k_{v_{\max }}-k_{v_{\min }}\right)} \left[\left(\alpha \left(1-\frac{1}{N}\right) k_{v_{\max }}-\frac{r_c}{r_d}\right)\left(-\alpha \left(1-\frac{1}{N}\right) k_{v_{\min }}+\frac{r_c}{r_d}\right)\right]
$$

$$
\begin{aligned}
& =-\lambda^3-\left(\frac{r b \nu_L \left(1-\frac{1}{N}\right)}{\mu_A}+\left(\nu_L+\mu_L+\mu_A\right)\right) \lambda^2-\left(r b \nu_L \left(1-\frac{2}{N}\right)+\frac{1}{N}\right) \lambda \\
& -P \left[\left(\alpha \left(1-\frac{1}{N}\right) k_{v_{\max }}-\frac{r_c}{r_d}\right)\left(-\alpha \left(1-\frac{1}{N}\right) k_{v_{\min }}+\frac{r_c}{r_d}\right)\right]
\end{aligned}
$$

$$
\begin{aligned}
& =-\lambda^3-\left(\frac{r b \nu_L \left(1-\frac{1}{N}\right)}{\mu_A}+\left(\nu_L+\mu_L+\mu_A\right)\right) \lambda^2-\left(r b \nu_L \left(1-\frac{1}{N}\right)\right) \lambda \\
& -P \left[\left(\alpha \left(1-\frac{1}{N}\right) k_{v_{\max }}-\frac{r_c}{r_d}\right)\left(-\alpha \left(1-\frac{1}{N}\right) k_{v_{\min }}+\frac{r_c}{r_d}\right)\right].
\end{aligned}
$$

The characteristic equation is given by $\operatorname{det}(J-\lambda I)=0$, i.e.,

$$
\begin{aligned}
& \lambda^3+\left(\frac{r b \nu_L \left(1-\frac{1}{N}\right)}{\mu_A}+\left(\nu_L+\mu_L+\mu_A\right)\right) \lambda^2+\left(r b \nu_L \left(1-\frac{1}{N}\right)\right) \lambda \\
& +P \left[\left(\alpha \left(1-\frac{1}{N}\right) k_{v_{\max }}-\frac{r_c}{r_d}\right)\left(-\alpha \left(1-\frac{1}{N}\right) k_{v_{\min }}+\frac{r_c}{r_d}\right)\right]=0
\end{aligned}
$$

$$
\text { where, } P=\frac{k r b r_d \mu_A}{m \left(k_{v_{\max }}-k_{v_{\min }}\right)}>0, \quad \alpha=\frac{m \nu_L}{\mu_A}>0, \quad N=\frac{r b \nu_L}{\left(\nu_L+\mu_L\right) \mu_A}>1.
$$

Therefore, we have the cubic characteristic equation:

$$
\lambda^3+a_2 \lambda^2+a_1 \lambda+a_0=0
$$

where the coefficients $a_2, a_1$, and $a_0$ are given by

\begin{itemize}
\item $ \quad a_2=\frac{r b \nu_L \left(1-\frac{1}{N}\right)}{\mu_A}+\left(\nu_L+\mu_L+\mu_A\right)$

\item $ \quad a_1=r b \nu_L \left(1-\frac{1}{N}\right)$

\item $ \quad a_0=P \left[\left(\alpha \left(1-\frac{1}{N}\right) K_{v_{\max }}-\frac{r_c}{r_d}\right)\left(-\alpha \left(1-\frac{1}{N}\right) K_{v_{\min }}+\frac{r_c}{r_d}\right)\right]$ with $P=\frac{k r  b r_d \mu_A}{m \left(K_{v_{\max }}-K_{v_{\min }}\right)}>0$.
\end{itemize}

For the characteristic equation, the necessary and sufficient conditions (Routh-Hurwitz stability criterion \cite{ogata2009modern}) for stability are
\begin{equation*}
a_2>0,\; a_1>0,\; a_0>0 \quad \text{and} \quad a_2 a_1>a_0 .
\end{equation*}

These conditions ensure that all eigenvalues have negative real parts. We have already shown that the first three conditions are satisfied in the main text. Now we analyze the last condition as follows:

\textbf{Conditions: ($a_2 a_1 > a_0$)}

$$\begin{aligned}
& a_2 a_1 > a_0 \\[0.3cm]
& \Rightarrow \left(\frac{r b \nu_L\left(1-\frac{1}{N}\right)}{\mu_A} + \nu_L + \mu_L + \mu_A\right)\left(r b \nu_L\left(1-\frac{1}{N}\right)\right) \\
& \quad > P \left[\alpha\left(1-\frac{1}{N}\right) K_{v_{\max }} - \frac{r_c}{r_d}\right]\left[-\alpha\left(1-\frac{1}{N}\right) K_{v_{\min }} + \frac{r_c}{r_d}\right] \\[0.3cm]
& \Rightarrow \frac{r b \nu_L}{\mu_A} \left[\left(1-\frac{1}{N}\right) + \frac{(\nu_L + \mu_L) \mu_A}{r b \nu_L} + \frac{\mu_A^2}{r b \nu_L}\right]\left(r b \nu_L\left(1-\frac{1}{N}\right)\right) \\
& \quad > P \left[\alpha\left(1-\frac{1}{N}\right) K_{v_{\max }} - \frac{r_c}{r_d}\right]\left[-\alpha\left(1-\frac{1}{N}\right) K_{v_{\min }} + \frac{r_c}{r_d}\right] \\[0.3cm]
& \Rightarrow \frac{r^2 b^2 \nu_L^2}{\mu_A} \left(1 + \frac{\mu_A^2}{r b \nu_L}\right)\left(1-\frac{1}{N}\right) \\
& \quad > P \left[\alpha\left(1-\frac{1}{N}\right) K_{v_{\max }} - \frac{r_c}{r_d}\right]\left[-\alpha\left(1-\frac{1}{N}\right) K_{v_{\min }} + \frac{r_c}{r_d}\right] \\[0.3cm]
& \Rightarrow \frac{r b \nu_L\left(r b \nu_L + \mu_A^2\right)\left(1-\frac{1}{N}\right)}{\mu_A} \\
& \quad > \frac{k r b r_d  \mu_A}{m \left(K_{v_{\max }} - K_{v_{\min }}\right)}\left[\alpha\left(1-\frac{1}{N}\right) K_{v_{\max }} - \frac{r_c}{r_d}\right]\left[-\alpha\left(1-\frac{1}{N}\right) K_{v_{\min }} + \frac{r_c}{r_d}\right] \\[0.3cm]
& \Rightarrow \frac{m \nu_L \left(K_{v_{\max }} - K_{v_{\min }}\right)\left(r b \nu_L + \mu_A^2\right)\left(1-\frac{1}{N}\right)}{k \mu_A^2 r_d} \\
& \quad > \left[\alpha\left(1-\frac{1}{N}\right) K_{v_{\max }} - \frac{r_c}{r_d}\right]\left[-\alpha\left(1-\frac{1}{N}\right) K_{v_{\min }} + \frac{r_c}{r_d}\right] \\[0.3cm]
& \Rightarrow Q \alpha \left(K_{v_{\max }} - K_{v_{\min }}\right)\left(1-\frac{1}{N}\right) \\
& \quad > \left[\alpha\left(1-\frac{1}{N}\right) K_{v_{\max }} - \frac{r_c}{r_d}\right]\left[-\alpha\left(1-\frac{1}{N}\right) K_{v_{\min }} + \frac{r_c}{r_d}\right], \\[0.3cm]
& \text{where }Q=\frac{\mu_A^2+r b \nu_L}{k \mu_A r_d}, \quad P = \frac{k r b r_d \mu_A}{m \left(K_{v_{\max }} - K_{v_{\min }}\right)} > 0.
\end{aligned}
$$

Now we assume \(x = r_c / r_d\), so the above inequality becomes:
\[
Q \alpha \left(K_{v_{\max}} - K_{v_{\min}}\right) \left(1 - \frac{1}{N}\right) > \left[\alpha \left(1 - \frac{1}{N}\right) K_{v_{\max}} - x\right] \left[-\alpha \left(1 - \frac{1}{N}\right) K_{v_{\min}} + x\right]
\]

which implies:
\[
Q \alpha \left(K_{v_{\max}} - K_{v_{\min}}\right) \left(1 - \frac{1}{N}\right) > \left[-\alpha^2 \left(1 - \frac{1}{N}\right)^2 K_{v_{\max}} K_{v_{\min}} + \alpha \left(1 - \frac{1}{N}\right) (K_{v_{\max}} + K_{v_{\min}}) x - x^2 \right]
\]

Rearranging the inequality, we have:
\[
x^2 - \alpha \left(1 - \frac{1}{N}\right) (K_{v_{\max}} + K_{v_{\min}}) x + \alpha^2 \left(1 - \frac{1}{N}\right)^2 K_{v_{\max}} K_{v_{\min}} + Q \alpha \left(K_{v_{\max}} - K_{v_{\min}}\right) \left(1 - \frac{1}{N}\right) > 0
\]

which is a quadratic inequality in \(x\) (where \(x = r_c / r_d\)) and we can write it as: $f(x)=
x^2 - B x + C > 0.$

Here
\[
B = \alpha \left(1 - \frac{1}{N}\right) (K_{v_{\max}} + K_{v_{\min}}),
\]
\[
C = \alpha^2 \left(1 - \frac{1}{N}\right)^2 K_{v_{\max}} K_{v_{\min}} + Q \alpha \left(K_{v_{\max}} - K_{v_{\min}}\right) \left(1 - \frac{1}{N}\right).
\]
The roots of the quadratic equation are given by
\[
x_{1,2} = \frac{B \pm \sqrt{B^2 - 4C}}{2}
\]
i.e.,
$$
x_1=\frac{r_c}{r_d}=\alpha \frac{\left(K_{v_{\max }}+K_{v_{\min }}\right)}{2} \left(1-\frac{1}{N}\right)-\frac{\sqrt{B^2-4 C}}{2}
$$
and
$$
x_2=\frac{r_c}{r_d}=\alpha \frac{\left(K_{v_{\max }}+K_{v_{\min }}\right)}{2} \left(1-\frac{1}{N}\right)+\frac{\sqrt{B^2-4 C}}{2}.
$$

Now we consider the following two cases:

\textbf{Case I:}

If $B^2-4 C<0$ then the roots $x_1, x_2$ are complex conjugates, which implies no real instability threshold exists.

In this case for any $x$ in the interval $\alpha\left(1-\frac{1}{N}\right) K_{v_{\min }}<\frac{r_c}{r_d}<\alpha\left(1-\frac{1}{N}\right) K_{v_{\max }}$, we obtain \[
f(x)=x^2 - B x + C > 0.
\]

Hence, the co-existence steady state $E_{05}$  is stable in the whole set given by Routh-Hurwitz criterion, which is $\alpha\left(1-\frac{1}{N}\right) K_{v_{\min }}<\frac{r_c}{r_d}<\alpha\left(1-\frac{1}{N}\right) k_{v_{\max }}$ and the required condition is

$$
\begin{gathered} 
B^2-4 C<0 \\
\Rightarrow \alpha^2\left(1-\frac{1}{N}\right)^2\left(k_{v_{\max }}+k_{v_{\min }}\right)^2-4 \alpha^2\left(1-\frac{1}{N}\right)^2 k_{v_{\max }} k_{v_{\min }} \\
-4 Q\alpha\left(k_{v_{\max }}-k_{v_{\min }}\right)\left(1-\frac{1}{N}\right)<0 \\
\Rightarrow \alpha^2\left(1-\frac{1}{N}\right)^2\left(k_{v_{\max }}-k_{v_{\min }}\right)^2-4 Q\alpha\left(k_{v_{\max }}-k_{v_{\min }}\right)\left(1-\frac{1}{N}\right)<0 \\
\Rightarrow \alpha\left(k_{v_{\max }}-k_{v_{\min }}\right)\left(1-\frac{1}{N}\right)<4Q.
\end{gathered}
$$

\textbf{Case II:}

If the condition $B^2-4 C>0$ is satisfied, i.e., $\Rightarrow \alpha\left(k_{v_{\max }}-k_{v_{\min }}\right)\left(1-\frac{1}{N}\right)>4 Q$, then the quadratic equation $f(x)=x^2-B x+C=0$ has the following two roots, given by

$$
x_{1,2}=\frac{B \pm \sqrt{B^2-4 C}}{2}
$$

where

$$
\begin{gathered}
B=\alpha\left(1-\frac{1}{N}\right)\left(K_{v_{\max }}+K_{v_{\min }}\right), \\
C=\alpha^2\left(1-\frac{1}{N}\right)^2 K_{v_{\max }} K_{v_{\min }}+Q \alpha\left(K_{v_{\max }}-K_{v_{\min }}\right)\left(1-\frac{1}{N}\right) .
\end{gathered}
$$

In the following calculation, we assume

$$
\alpha\left(1-\frac{1}{N}\right) K_{v_{\max }}=\chi_{\max} \quad\text {and} \quad \alpha\left(1-\frac{1}{N}\right) K_{v_{\min }}=\chi_{\min}.
$$

Plugging these values, we obtain

$$
\begin{gathered}
x_{1,2}=\frac{\left(\chi_{\max }+\chi_{\min }\right) \pm \sqrt{\left(\chi_{\max }+\chi_{\min }\right)^2-4\left(\chi_{\max } \chi_{\min }+Q\left(\chi_{\max }-\chi_{\min }\right)\right)}}{2} \\
=\frac{\left(\chi_{\max }+\chi_{\min }\right) \pm \sqrt{\left(\chi_{\max }-\chi_{\min }\right)^2-4 Q\left(\chi_{\max }-\chi_{\min }\right)}}{2}.
\end{gathered}
$$
Now,
$$
\begin{gathered}
x_1=\frac{\left(\chi_{\max }+\chi_{\min }\right)-\sqrt{\left(\chi_{\max }-\chi_{\min }\right)^2-4 Q\left(\chi_{\max }-\chi_{\min }\right)}}{2}>\frac{\left(\chi_{\max }+\chi_{\min }\right)-\sqrt{\left(\chi_{\max }-\chi_{\min }\right)^2}}{2}=\chi_{\min } \\
\Rightarrow x_1>\chi_{\min }
\end{gathered}
$$
i.e.,
$$
x_1(N)=\frac{r_c}{r_d}(N)=\alpha \frac{\left(K_{v_{\max }}+K_{v_{\min }}\right)}{2}\left(1-\frac{1}{N}\right)-\frac{\sqrt{B^2-4 C}}{2}>\alpha\left(1-\frac{1}{N}\right) K_{v_{\min }}.
$$
Next,
$$
\begin{gathered}
x_2=\frac{\left(\chi_{\max }+\chi_{\min }\right)+\sqrt{\left(\chi_{\max }-\chi_{\min }\right)^2-4 Q\left(\chi_{\max }-\chi_{\min }\right)}}{2}<\frac{\left(\chi_{\max }+\chi_{\min }\right)+\sqrt{\left(\chi_{\max }-\chi_{\min }\right)^2}}{2}=\chi_{\max } \\
\Rightarrow x_2<\chi_{\max }
\end{gathered}
$$
i.e.,
$$
x_2(N)=\frac{r_c}{r_d}(N)=\alpha \frac{\left(K_{v_{\max }}+K_{v_{\min }}\right)}{2}\left(1-\frac{1}{N}\right)+\frac{\sqrt{B^2-4 C}}{2}<\alpha\left(1-\frac{1}{N}\right) K_{v_{\max }} .
$$
Hence, the two distinct real roots $x_1(N)$ and $x_2(N)$ of $f(x)$ always lies within the interval $\left(\alpha\left(1-\frac{1}{N}\right) K_{v_{\min }}, \alpha\left(1-\frac{1}{N}\right) K_{v_{\max }}\right)$ if $B^2-4 C>0$, i.e.,

$$
\left(x_1, x_2\right) \subset\left(\alpha\left(1-\frac{1}{N}\right) K_{v_{\min }}, \alpha\left(1-\frac{1}{N}\right) K_{v_{\max }}\right) .
$$

Therefore, $E_{05}$ will be stable in the following parameter regions, given by

$$
\alpha\left(1-\frac{1}{N}\right) k_{v_{\min }}<\frac{r_c}{r_d}<x_1 \text { and } x_2<\frac{r_c}{r_d}<\alpha\left(1-\frac{1}{N}\right) k_{v_{\max }} .
$$

Finally, by combining Routh-Hurwitz stability criterion, we have the following conditions for the local stability of $E_{05}$: 

\begin{enumerate}

\item[] (i) If $B^2-4 C<0$ i.e., $\Rightarrow \alpha\left(k_{v_{\max }}-k_{v_{\min }}\right)\left(1-\frac{1}{N}\right)<4 Q$, then $E_{05}$ will be stable in the interval: 
$\alpha\left(1-\frac{1}{N}\right) K_{v_{\min }}<\frac{r_c}{r_d}<\alpha\left(1-\frac{1}{N}\right) k_{v_{\max }}$.

\item[] (ii) If $B^2-4 C>0$ i.e., $\Rightarrow \alpha\left(k_{v_{\max }}-k_{v_{\min }}\right)\left(1-\frac{1}{N}\right)>4 Q,$ then $E_{05}$ will be stable in the intervals 

$$
\begin{aligned}
&\left[\alpha \frac{\left(K_{v_{\max }}+K_{v_{\min }}\right)}{2} \left(1-\frac{1}{N}\right)+\frac{\sqrt{B^2-4 C}}{2}\right]<\frac{r_c}{r_d}<\alpha  K_{v_{\max }} \left(1-\frac{1}{N}\right)\\
&\text { and }\\
&\alpha K_{v_{\min }} \left(1-\frac{1}{N}\right)<\frac{r_c}{r_d}<\left[\alpha \frac{\left(K_{v_{\max }}+K_{v_{\min }}\right)}{2} \left(1-\frac{1}{N}\right)-\frac{\sqrt{B^2-4 C}}{2}\right]
\end{aligned}
$$

where

$$
\alpha=\frac{m \nu_L}{\mu_A}, \quad B=\alpha\left(1-\frac{1}{N}\right)\left(K_{v_{\max }}+K_{v_{\min }}\right),
$$
$$
C=\alpha^2\left(1-\frac{1}{N}\right)^2 K_{v_{\max }} K_{v_{\min }}+Q\alpha \left(K_{v_{\max }}-K_{v_{\min }}\right)\left(1-\frac{1}{N}\right), Q=\frac{\mu_A^2+r b \nu_L}{k \mu_A r_d},
$$
$$
x_1=\frac{r_c}{r_d}=\alpha \frac{\left(K_{v_{\max }}+K_{v_{\min }}\right)}{2} \left(1-\frac{1}{N}\right)-\frac{\sqrt{B^2-4 C}}{2}
$$
and
$$
x_2=\frac{r_c}{r_d}=\alpha \frac{\left(K_{v_{\max }}+K_{v_{\min }}\right)}{2} \left(1-\frac{1}{N}\right)+\frac{\sqrt{B^2-4 C}}{2}.
$$
 \end{enumerate}   
\end{proof}

\newpage
\subsection{A model with constant payoffs and public health interventions}

 We examine the impact of persuasive public health campaigns and interventions on our model~\eqref{eq:simplified_model_1}. We assume $\gamma$ as a public health (PH) action parameter to explore how a steady influence from PH authorities affects the population dynamics. Following the approach from \cite {asfaw_itn_control}, we incorporate a new term $\gamma(1-w)$ as follows:
\begin{equation}
\label{eq:PH_extension}
\frac{dw}{dt} = kw(1 - w)(-r_c + r_d) + \gamma(1 - w) 
\end{equation}

where $\gamma>0$ is a constant parameter representing the effectiveness of steady PH actions influencing household behavior. This assumption is practical for scenarios where PH efforts are sustained and consistent, such as ongoing education and awareness campaigns or continuous resource distribution.  The rest of the model structure, including the mosquito dynamics and carrying capacity \( K_v(w) \) remains the same as described in Equations~\eqref{eq:simplified_model_1}.

\textbf{Steady states}

To determine biologically plausible (nonnegative) steady states of the extended system,  we solve for \( (L_v, A_v, w) \) such that the time derivatives in the equations for $L_v$ and $A_v$ (see main text) and Equation~\eqref{eq:PH_extension} are zero. We must solve the following system:

\[
\begin{aligned}
& rb A_v \left(1 - \frac{L_v}{K_v(w)}\right) - (\nu_L + \mu_L) L_v = 0, \\
& \nu_L L_v - \mu_A A_v = 0, \\
& kw(1 - w)(-r_c + r_d) + \gamma(1 - w) = 0,
\end{aligned}
\]
where
\[
K_v(w) = K_{v_{\max}} - w(K_{v_{\max}} - K_{v_{\min}}).
\]

We solve the system and summarize the four steady states in the following proposition. 

%\newpage

\begin{proposition}\label{prop: Model with PH Intervention}

The steady states $\left(L_v^*, A_v^*, w^*\right)$ of the extended system, along with their interpretation, are as follows:

$\underline{\tilde{E}_{01}}:\left(L_v^*, A_v^*, w^*\right)=\left(0,0, \frac{\gamma}{k\left(r_c-r_d\right)}\right)$ where $0 \leq \frac{\gamma}{k\left(r_c-r_d\right)} \leq 1$: Mosquito-free, partial breeding site control.

$\underline{E_{02}}:\left(L_v^*, A_v^*, w^*\right)=(0,0,1)$: Mosquito-free, full breeding site control.

$\underline{\tilde{E}_{03}}:\left(L_v^*, A_v^*, w^*\right)=\left(K_v{ }^*\left(1-\frac{1}{N}\right), \frac{\nu_L}{\mu_A} K_v{ }^*\left(1-\frac{1}{N}\right), \frac{\gamma}{k\left(r_c-r_d\right)}\right)$ where $0 \leq \frac{\gamma}{k\left(r_c-r_d\right)} \leq 1$ and 

$K_v^*=K_v\left(\frac{\gamma}{k\left(r_e-r_d\right)}\right)=K_{v_{\text {max }}}-\frac{\gamma}{k\left(r_e-r_d\right)}\left(K_{v_{\text {max }}}-K_{v_{\text {min }}}\right)$ if and only if $N>1$: Mosquito-positive, partial breeding site control. 

$\underline{E_{04}:}\left(L_v^*, A_v^*, w^*\right)=\left(K_{v_{\text {min }}}\left(1-\frac{1}{N}\right), \frac{\nu_L}{\mu_A} K_{v_{\text {min }}}\left(1-\frac{1}{N}\right), 1\right)$ if and only if $N>1$: Mosquito-positive, full breeding site control.

\end{proposition}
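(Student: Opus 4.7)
The plan is to mirror the argument used for Proposition~\ref{prop: Baseline Model}, adapting it to the modified behavioral equation. I would begin by rewriting the third equilibrium condition by factoring out $(1-w)$, yielding
\[
(1-w)\bigl[kw(-r_c+r_d)+\gamma\bigr]=0,
\]
which immediately gives two branches: either $w^*=1$ or $w^*=\gamma/[k(r_c-r_d)]$. The latter requires $r_c\neq r_d$ and, to be biologically admissible, must lie in $[0,1]$; since $\gamma,k>0$, this forces $r_c-r_d>0$ and $\gamma\leq k(r_c-r_d)$, matching the interval stated in the proposition. I would flag this feasibility condition at the outset so that the partial-control equilibria are only introduced when the constraint is met.

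Next, from the adult equation $\nu_L L_v-\mu_A A_v=0$ I obtain $A_v^*=(\nu_L/\mu_A)L_v^*$, exactly as in the simplified model. Substituting this into the first equation and factoring out $L_v$ gives
\[
L_v^*\left[\frac{rb\nu_L}{\mu_A}\!\left(1-\frac{L_v^*}{K_v(w^*)}\right)-(\nu_L+\mu_L)\right]=0,
\]
so either $L_v^*=0$ or $L_v^*=K_v(w^*)(1-1/N)$, with the second branch requiring $N>1$ so that $L_v^*>0$. This reproduces the same dichotomy as in the constant-payoff case, and importantly the entomological analysis is unchanged because the PH term only perturbs the $w$-equation.

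I would then assemble the four equilibria by pairing each choice of $w^*$ with each choice of $L_v^*$ and computing $A_v^*$ via the ratio above. The pairing $(L_v^*,w^*)=(0,1)$ recovers $E_{02}$ and $(K_{v_{\min}}(1-1/N),1)$ recovers $E_{04}$, both as in Proposition~\ref{prop: Baseline Model}. The new partial-control branch substituted into $L_v^*=0$ and $L_v^*=K_v(w^*)(1-1/N)$ produces $\tilde E_{01}$ and $\tilde E_{03}$, respectively; for $\tilde E_{03}$ I would write $K_v^*=K_{v_{\max}}-[\gamma/(k(r_c-r_d))](K_{v_{\max}}-K_{v_{\min}})$ to keep the expression compact.

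The calculations are largely algebraic substitutions, so the main obstacle is not computational but rather bookkeeping the parameter constraints: one must clearly state that $\tilde E_{01}$ and $\tilde E_{03}$ exist only when $\gamma/[k(r_c-r_d)]\in[0,1]$ (with $r_c>r_d$), that $\tilde E_{03}$ additionally requires $N>1$, and that $E_{04}$ requires $N>1$. I would also note the degenerate case $r_c=r_d$, where the third equation collapses to $\gamma(1-w)=0$, leaving $w^*=1$ as the only root and thus excluding the partial-control equilibria; this edge case is worth remarking on so that the parameter regimes of the proposition are unambiguous.
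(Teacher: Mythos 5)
Your proposal is correct and follows essentially the same route as the paper's proof: factor the behavioral equation as $(1-w)[kw(r_d-r_c)+\gamma]=0$ to obtain the two branches $w^*=1$ and $w^*=\gamma/[k(r_c-r_d)]$, reduce the entomological equations exactly as in the constant-payoff case, and pair the branches. Your explicit remarks on the feasibility constraint forcing $r_c>r_d$ and on the degenerate case $r_c=r_d$ are sound additions that the paper leaves implicit.
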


\begin{proof}

By setting the time derivative to zero, from Equation~\eqref{eq:PH_extension}, we have  $$
\begin{aligned}
& kw(1-w)(-r_c + r_d)+\gamma(1-w)=0 \\
\Rightarrow \quad & (1-w)[wk(r_d - r_c)+\gamma]=0.
\end{aligned}
$$

This equation has two solutions given by $w=1$ or $w=\frac{\gamma}{k(r_c - r_d)},$  where $0 \leq \frac{\gamma}{k\left(r_c-r_d\right)} \leq 1$. Now from the equation \[\nu_L L_v - \mu_A A_v = 0,
\] we have
\[
A_v = \frac{\nu_L}{\mu_A} L_v.
\]
By substituting \(A_v = \frac{\nu_L}{\mu_A} L_v\) and \(K_v(w) = K_{v_{\max}} - w (K_{v_{\max}} - K_{v_{\min}})\) in the following equation, we have
\[
r b \left(\frac{\nu_L}{\mu_A} L_v \right) \left(1 - \frac{L_v}{K_v(w)}\right) - (\nu_L + \mu_L) L_v = 0.
\]
This gives two solutions given by $L_v = 0$ or
\[
\left(\frac{r b \nu_L}{\mu_A}\right) \left(1 - \frac{L_v}{K_v(w)}\right) = \nu_L + \mu_L
\]
which implies
$$
L_v  = K_v(w)\left(1-\frac{\mu_A\left(\nu_L+\mu_L\right)}{r b \nu_L}\right) 
\Rightarrow L_v  =K_v(w)\left(1-\frac{1}{N}\right) \text { if and only if } N>1.
$$
Therefore, $A_v=\frac{\nu_L}{\mu_A} L_v=\frac{\nu_L}{\mu_A} K_v(w)\left(1-\frac{1}{N}\right)$ and the following holds:

\begin{itemize}

\item When $w=\frac{\gamma}{k(r_c-r_d)}, L_v=0$ then we get $A_v=0.$

\item When $w=1, L_v=0$ then we get $A_v=0.$

\item When $w=\frac{\gamma}{k(r_c-r_d)}$ where $0 \leq \frac{\gamma}{k\left(r_c-r_d\right)} \leq 1$, we get $L_v=K_v\left(\frac{\gamma}{k(r_c-r_d)}\right)\left(1-\frac{1}{N}\right)=K_v^*\left(1-\frac{1}{N}\right)$ 

and $A_v=\frac{\nu_L}{\mu_A} K_v\left(\frac{\gamma}{k(r_c-r_d)}\right)\left(1-\frac{1}{N}\right)=\frac{\nu_L}{\mu_A} K_v^*\left(1-\frac{1}{N}\right)$
where, $K_v^*=K_v\left(\frac{\gamma}{k(r_c-r_d)}\right)=K_{v_{\text {max }}}-\frac{\gamma}{k(r_c-r_d)}\left(K_{v_{\text {max}}}-K_{v_{\text {min}}}\right)$.

\item When $w=1$, we get $L_v=K_v(1)\left(1-\frac{1}{N}\right)=K_{v_{\text {min }}}\left(1-\frac{1}{N}\right)$ and
$
A_v=\frac{\nu_L}{\mu_A} K_v(1)\left(1-\frac{1}{N}\right)=\frac{\nu_L}{\mu_A} K_{v_{\min }}\left(1-\frac{1}{N}\right).
$

\end{itemize}

Therefore, we obtained the following  steady states:

\begin{enumerate}

\item $\underline{\tilde{E}_{01}}:\left(L_v^*, A_v^*, w^*\right)=\left(0,0, \frac{\gamma}{k\left(r_c-r_d\right)}\right)$ where $0 \leq \frac{\gamma}{k\left(r_c-r_d\right)} \leq 1$,

\item $\underline{E_{02}}:\left(L_v^*, A_v^*, w^*\right)=(0,0,1)$,

\item $\underline{\tilde{E}_{03}}:\left(L_v^*, A_v^*, w^*\right)=\left(K_v{ }^*\left(1-\frac{1}{N}\right), \frac{\nu_L}{\mu_A} K_v{ }^*\left(1-\frac{1}{N}\right), \frac{\gamma}{k\left(r_c-r_d\right)}\right)$ where $0 \leq \frac{\gamma}{k\left(r_c-r_d\right)} \leq 1$ and

$K_v^*=K_v\left(\frac{\gamma}{k\left(r_e-r_d\right)}\right)=K_{v_{\text {max}}}-\frac{\gamma}{k\left(r_e-r_d\right)}\left(K_{v_{\text {max}}}-K_{v_{\text {min}}}\right)$ if and only if $N>1$,

\item $\underline{E_{04}}:\left(L_v^*, A_v^*, w^*\right)=\left(K_{v_{\min }}\left(1-\frac{1}{N}\right), \frac{\nu_L}{\mu_A} K_{v_{\min }}\left(1-\frac{1}{N}\right), 1\right)$ if and only if $N>1$.

\end{enumerate}
\end{proof}

We perform a local stability analysis of the four steady states, and the stability depends on the behavioral parameters \( r_c \), \( r_d \), $\gamma$ and entomological parameter $N$. Following the approach used in section 3.1, we apply the Jacobian matrix and eigenvalue analysis to establish the stability conditions, which is the content of the following theorem:

\begin{theorem}
The local stability of the steady states of the system with public health intervention is characterized as follows:
\begin{enumerate}

\item If $r_c-r_d>\frac{\gamma}{k}$, and $N<1$, then $\tilde{E}_{01}$ is LAS; $E_{02}$ is unstable.

\item If $r_c-r_d<\frac{\gamma}{k}$, and $N<1$, then $E_{02}$ is LAS.

\item If $r_c-r_d>\frac{\gamma}{k}$, and $N>1$, then $\tilde{E}_{03}$ is LAS; $\tilde{E}_{01}, E_{02}$, and $E_{04}$ are unstable.

\item If $r_c-r_d<\frac{\gamma}{k}$, and $N>1$, then $E_{04}$ is LAS; $E_{02}$ is unstable.

\end{enumerate}
\label{thm:Model with PH Intervention1}
\end{theorem}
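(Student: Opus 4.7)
The plan is to adapt the linearization strategy used in the proof of Theorem~\ref{thm: Baseline Model}. The key structural observation is that the $w$-equation~\eqref{eq:PH_extension} depends only on $w$, so the Jacobian of the extended system retains the block-triangular form $J_{31}=J_{32}=0$. Hence, at every steady state, one eigenvalue of $J$ equals the scalar $J_{33}|_{SS}$ and the remaining two eigenvalues arise from the $2\times 2$ upper-left block governing $(L_v,A_v)$. This mosquito block is formally identical to the one analyzed in Theorem~\ref{thm: Baseline Model}: at the mosquito-free equilibria $\tilde{E}_{01},E_{02}$ it reduces to the matrix whose eigenvalues have negative real parts iff $N<1$; at the mosquito-positive equilibria $\tilde{E}_{03},E_{04}$ the eigenvalues of the block have negative real parts iff $N>1$, which is also the existence condition for these equilibria.

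The remaining work is to compute $J_{33}=k(r_d-r_c)(1-2w^*)-\gamma$ at each equilibrium and determine its sign. At $\tilde{E}_{01}$ and $\tilde{E}_{03}$, both having $w^*=\gamma/[k(r_c-r_d)]$, a direct substitution gives $2kw^*(r_d-r_c)=-2\gamma$, so that $J_{33}=k(r_d-r_c)+\gamma$, and therefore $J_{33}<0$ iff $r_c-r_d>\gamma/k$. At $E_{02}$ and $E_{04}$, both with $w^*=1$, one finds $J_{33}=k(r_c-r_d)-\gamma$, so $J_{33}<0$ iff $r_c-r_d<\gamma/k$. Pairing these sign conditions with the offspring-number criterion on the mosquito block yields items~1--4: items~1 and~3 correspond to $r_c-r_d>\gamma/k$ combined with $N<1$ and $N>1$ respectively, while items~2 and~4 correspond to $r_c-r_d<\gamma/k$ combined with the same alternatives for $N$. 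Instability of the remaining equilibria follows by reversing the relevant strict inequality, producing either a positive $J_{33}$ or a positive-real-part eigenvalue from the mosquito block.

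The main obstacle I anticipate is not analytical but combinatorial: one must keep track of which equilibria actually exist in each parameter regime. The partial-control equilibria $\tilde{E}_{01}$ and $\tilde{E}_{03}$ exist only when $\gamma/[k(r_c-r_d)]\in[0,1]$, i.e., when $r_c-r_d\geq\gamma/k$, and $\tilde{E}_{03},E_{04}$ additionally require $N>1$. This explains why item~2 claims only the stability of $E_{02}$ (since $\tilde{E}_{01}$ does not exist in that regime), whereas item~3 lists $\tilde{E}_{01}$ as unstable because it exists there but its mosquito block is destabilized by $N>1$. Once the existence window and the two sign criteria ($J_{33}$ and the block determinant) are aligned across the four parameter regimes, the theorem follows from the Hartman--Grobman theorem applied separately to each equilibrium.
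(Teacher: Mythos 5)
Your proposal is correct and follows essentially the same route as the paper's proof: linearize at each equilibrium, exploit the block-triangular Jacobian (zeros in the third row off the diagonal) so that one eigenvalue is the scalar $J_{33}$ and the other two come from the $2\times 2$ mosquito block, then pair the sign of $J_{33}$ (which you compute correctly as $k(r_d-r_c)+\gamma$ at $\tilde{E}_{01},\tilde{E}_{03}$ and $k(r_c-r_d)-\gamma$ at $E_{02},E_{04}$) with the $N\lessgtr 1$ criterion for the block. The only difference is presentational: you factor out the common structure once and handle existence bookkeeping explicitly, whereas the paper repeats the full characteristic-polynomial computation at each of the four equilibria.
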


\begin{proof} 

We proceed to examine each steady state separately and verify the claims presented in items 1--4.

 \begin{itemize}

  \item $\underline{\tilde{E}_{01}}$: We evaluate the Jacobian matrix at the steady state ${\tilde{E}_{01}}$ and find the eigenvalues. Then we derive the conditions for stability. The Jacobian matrix at ${\tilde{E}_{01}}$ is given by

$$
J-\lambda I=\left[\begin{array}{ccc}
-\left(\nu_L+\mu_L\right)-\lambda & r b & 0 \\
\nu_L & -\mu_A-\lambda & 0 \\
0 & 0 & (k(- r_c +r_d)+\gamma)-\lambda
\end{array}\right].
$$

The determinant of this matrix can be calculated by expanding along the third row, i.e.,

$$
\begin{aligned}
& \left.\operatorname{det}(J-\lambda I)=\left(k\left(-r_c+r_d\right)+\gamma\right)-\lambda\right) \operatorname{det}\left[\begin{array}{cc}
-\left(\nu_L+\mu_L\right)-\lambda & r b \\
\nu_L & -\mu_A-\lambda
\end{array}\right] \\
& \left.=\left(k\left(-r_c+r_d\right)+\gamma-\lambda\right)\right)\left(\lambda^2+\left[\left(\nu_L+\mu_L\right)+\mu_A\right] \lambda+\left[\left(\nu_L+\mu_L\right) \mu_A-r b \nu_L\right]\right)
\end{aligned}
$$

The first eigenvalue is \(\lambda_1 = k(-r_c + r_d)+\gamma\) and the other two eigenvalues are given by 
$$\lambda_{2,3} = \frac{-\left[(\nu_L + \mu_L) + \mu_A\right] \pm \sqrt{\left[(\nu_L + \mu_L) + \mu_A\right]^2 - 4\left[(\nu_L + \mu_L) \mu_A - r b \nu_L\right]}}{2}.$$

Since $k>0$, the eigenvalue \(\lambda_1\) will be negative if and only if $k\left(-r_c+r_d\right)+\gamma<0 \Leftrightarrow r_c-r_d>\frac{\gamma}{k}$. Moreover, since  $(\nu_L + \mu_L) + \mu_A > 0 $,  the other two eigenvalues \(\lambda_{2,3}\) will have negative real parts if and only if
$$
\begin{aligned}
& \left(\nu_L+\mu_L\right) \mu_A-r b \nu_L>0 
& \Leftrightarrow\quad {N}<1.
\end{aligned}
$$

 Since $r_c-r_d>\frac{\gamma}{k}$ is also an existence condition for ${\tilde{E}_{01}}$, the analysis confirms that ${\tilde{E}_{01}}$ is LAS, establishing the stability conditions stated in item 1, and becomes unstable when $N>1$.

 \item \underline{$E_{02}$:} Next, we analyze the local stability of the steady state $E_{02}$. The characteristic equation is given by \(\det(J - \lambda I) = 0\), where

$$
J-\lambda I =\left[\begin{array}{ccc}
-\left(\nu_L+\mu_L\right)-\lambda & r b & 0 \\
\nu_L & -\mu_A-\lambda & 0 \\
0 & 0 & (k(r_c - r_d) -\gamma)-\lambda
\end{array}\right]. 
$$

The determinant of this matrix can be calculated by expanding along the third row, i.e., 

$$
\begin{aligned}
\operatorname{det}(J & -\lambda I)=((k(r_c - r_d)-\gamma)-\lambda) \operatorname{det}\left[\begin{array}{cc}
-\left(\nu_L+\mu_L\right)-\lambda & r b \\
\nu_L & -\mu_A-\lambda
\end{array}\right] \\
& =((k(r_c - r_d) -\gamma)-\lambda) \left(\lambda^2+\left[\left(\nu_L+\mu_L\right)+\mu_A\right] \lambda+\left[\left(\nu_L+\mu_L\right) \mu_A-r b \nu_L\right]\right).
\end{aligned}
$$

The characteristic equation is
$
\begin{aligned}
&((k(r_c - r_d)-\gamma)-\lambda) \left(\lambda^2+\left[\left(\nu_L+\mu_L\right)+\mu_A\right] \lambda+\left[\left(\nu_L+\mu_L\right) \mu_A-r b \nu_L\right]\right)=0.
\end{aligned}
$

A first eigenvalue is thus given by $\lambda_1=k(r_c - r_d) -\gamma$
and the other two are roots of the quadratic polynomial, i.e., 
\[
\lambda_{2,3} = \frac{-\left[(\nu_L + \mu_L) + \mu_A\right] \pm \sqrt{\left[(\nu_L + \mu_L) + \mu_A\right]^2 - 4\left[(\nu_L + \mu_L) \mu_A - r b \nu_L\right]}}{2}.
\]

The eigenvalue \(\lambda_1\) will be negative if
$k\left(r_c-r_d\right)-\gamma<0 \Leftrightarrow r_c-r_d<\frac{\gamma}{k}$
and  the other two eigenvalues \(\lambda_{2,3}\) will have negative real parts if $$
\begin{aligned}
& \left(\nu_L+\mu_L\right) \mu_A-r b \nu_L>0 
& \Leftrightarrow\quad {N}<1.
\end{aligned}$$

Therefore, $E_{02}$ is LAS, confirming the stability conditions stated in item 2. Conversely, $E_{02}$ becomes unstable if $r_c-r_d>\frac{\gamma}{k}$, or $N>1$, since these conditions lead to at least one positive eigenvalue.

\item $\underline{\tilde{E}_{03}}$: The Jacobian matrix at ${\tilde{E}_{03}}$ is given by

$$
J=\left[\begin{array}{ccc}
-\left(\frac{r b \nu_L}{\mu_A}\left(1-\frac{1}{N}\right)+\nu_L+\mu_L\right) & \frac{r b}{N} & -r b \frac{\nu_L}{\mu_A}\left(1-\frac{1}{{N}}\right)^2\left(K_{v_{\max }}-K_{v_{\min }}\right) \\
\nu_L & -\mu_A & 0 \\
0 & 0 & k(- r_c+ r_d)+\gamma
\end{array}\right]
$$

and the characteristic equation is given by \(\det(J - \lambda I) = 0\) where

$$
J-\lambda I =\left[\begin{array}{ccc}
-\left(\frac{r b \nu_L}{\mu_A}\left(1-\frac{1}{N}\right)+\nu_L+\mu_L\right)-\lambda & \frac{r b}{N} & -r b \frac{\nu_L}{\mu_A}\left(1-\frac{1}{{N}}\right)^2\left(K_{v_{\max }}-K_{v_{\min }}\right) \\
\nu_L & -\mu_A-\lambda & 0 \\
0 & 0 &( k(- r_c+r_d)+\gamma)-\lambda
\end{array}\right].
$$

The determinant of this matrix can be calculated by expanding along the third row, i.e., 

$$
\operatorname{det}(J-\lambda I)=[( k(- r_c+r_d)+\gamma)-\lambda]\operatorname{det}\left[\begin{array}{cc}
-\left(\frac{r b \nu_L}{\mu_A}\left(1-\frac{1}{{N}}\right)+\nu_L+\mu_L\right)-\lambda & \frac{r b}{\mathcal{N}} \\
\nu_L & -\mu_A-\lambda
\end{array}\right]
$$

$$
\begin{aligned}
& =[( k(- r_c+r_d)+\gamma)-\lambda] \\
& {\left[\lambda^2+\left(\left(\frac{r b \nu_L}{\mu_A}\left(1-\frac{1}{{N}}\right)+\nu_L+\mu_L\right)+\mu_A\right) \lambda+\left(\left(\frac{r b v_L}{\mu_A}\left(1-\frac{1}{{N}}\right)+\nu_L+\mu_L\right) \mu_A-\frac{r b \nu_L}{{N}}\right)\right]}.
\end{aligned}
$$

The first eigenvalue \(\lambda_1=k(- r_c+r_d)+\gamma\) and the other two eigenvalues are given by $$
\begin{aligned}
& \lambda_{2,3}=\frac{-\left(\left(\frac{r b \nu_L}{\mu_A}\left(1-\frac{1}{{N}}\right)+\nu_L+\mu_L\right)+\mu_A\right)}{2} \pm \\
& \frac{\sqrt{\left(\left(\frac{r b \nu_L}{\mu_A}\left(1-\frac{1}{{N}}\right)+\nu_L+\mu_L\right)+\mu_A\right)^2-4\left(\left(\frac{r b \nu_L}{\mu_A}\left(1-\frac{1}{{N}}\right)+\nu_L+\mu_L\right) \mu_A-\frac{r b \nu_L}{{N}}\right)}}{2}.
\end{aligned}
$$

Since $k>0$, the eigenvalue \(\lambda_1\) will be negative if and only if $k\left(-r_c+r_d\right)+\gamma<0 \Leftrightarrow r_c-r_d>\frac{\gamma}{k}$.

The eigenvalues \(\lambda_{2,3}\) will have negative real parts if and only if

$$
\begin{aligned}
& \left(\frac{r b \nu_L}{\mu_A}\left(1-\frac{1}{{N}}\right)+\nu_L+\mu_L\right) \mu_A-\frac{r b \nu_L}{{N}}>0 \\
& \Leftrightarrow \quad\left(1-\frac{2}{{N}}\right)>-\frac{\left(\nu_L+\mu_L\right) \mu_A}{r b \nu_L} = \frac{1}{{N}} \\
\end{aligned}
$$

which is true if $N>1$. Since $N>1$ and $r_c-r_d>\frac{\gamma}{k}$ are also existence conditions for ${\tilde{E}_{03}}$, hence, ${\tilde{E}_{03}}$ is LAS, confirming the stability condition stated in item 3. 

\item \underline{$E_{04}$:} The Jacobian matrix at $E_{04}$ is given by

$$
J=\left[\begin{array}{ccc}
-\left(\frac{r b \nu_L}{\mu_A}\left(1-\frac{1}{{N}}\right)+\nu_L+\mu_L\right) & \frac{r b}{{N}} & -r b \frac{\nu_L}{\mu_A}\left(1-\frac{1}{{N}}\right)^2\left(K_{v_{\max }}-K_{v_{\min }}\right) \\
\nu_L & -\mu_A & 0 \\
0 & 0 & k(r_c - r_d) -\gamma
\end{array}\right]
$$

and the characteristic equation is given by \(\det(J - \lambda I) = 0\) where

$$
J-\lambda=\left[\begin{array}{ccc}
-\left(\frac{r b \nu_L}{\mu_A}\left(1-\frac{1}{{N}}\right)+\nu_L+\mu_L\right)-\lambda & \frac{r b}{{N}} & -r b \frac{\nu_L}{\mu_A}\left(1-\frac{1}{{N}}\right)^2\left(K_{v_{\text {max }}}-K_{v_{\text {min }}}\right) \\
\nu_L & -\mu_A-\lambda & 0 \\
0 & 0 & ((k(r_c - r_d) -\gamma)-\lambda
\end{array}\right].
$$

The determinant of this matrix can be calculated by expanding along the third row, i.e.,

$$
\operatorname{det}(J-\lambda I)=[(k(r_c - r_d)-\gamma)-\lambda]  \operatorname{det}\left[\begin{array}{cc}
-\left(\frac{r b \nu_L}{\mu_A}\left(1-\frac{1}{{N}}\right)+\nu_L+\mu_L\right)-\lambda & \frac{r b}{{N}} \\
\nu_L & -\mu_A-\lambda
\end{array}\right]
$$

$$
\begin{aligned}
& =[(k(r_c - r_d) -\gamma)-\lambda] \\
& {\left[\lambda^2+\left(\left(\frac{r b \nu_L}{\mu_A}\left(1-\frac{1}{{N}}\right)+\nu_L+\mu_L\right)+\mu_A\right) \lambda+\left(\left(\frac{r b \nu_L}{\mu_A}\left(1-\frac{1}{{N}}\right)+\nu_L+\mu_L\right) \mu_A-\frac{r b \nu_L}{{N}}\right)\right]}.
\end{aligned}
$$

The first eigenvalue \(\lambda_1=k\left(r_c-r_d\right)-\gamma\) and the other two eigenvalues are given by $$
\begin{aligned}
& \lambda_{2,3}=\frac{-\left(\left(\frac{r b \nu_L}{\mu_A}\left(1-\frac{1}{{N}}\right)+\nu_L+\mu_L\right)+\mu_A\right)}{2} \pm \\
& \frac{\sqrt{\left(\left(\frac{r b \nu_L}{\mu_A}\left(1-\frac{1}{{N}}\right)+\nu_L+\mu_L\right)+\mu_A\right)^2-4\left(\left(\frac{r b \nu_L}{\mu_A}\left(1-\frac{1}{{N}}\right)+\nu_L+\mu_L\right) \mu_A-\frac{r b \nu_L}{{N}}\right)}}{2}
\end{aligned}
$$

Since $k>0$, the eigenvalue \(\lambda_1\) will be negative if and only if $k\left(r_c-r_d\right)-\gamma<0 \Leftrightarrow r_c-r_d<\frac{\gamma}{k}$ and the other two eigenvalues \(\lambda_{2,3}\) will have negative real parts if 

$$
\begin{aligned}
& \left(\frac{r b \nu_L}{\mu_A}\left(1-\frac{1}{{N}}\right)+\nu_L+\mu_L\right) \mu_A-\frac{r b \nu_L}{{N}}>0 
\end{aligned}
$$

which is the same condition obtained for $E_{03}$. Since $N>1$ is also an existence condition for $E_{04}$, the analysis confirms that $E_{04}$ is LAS, as outlined in item 4. $E_{04}$ becomes unstable if $r_c-r_d>\frac{\gamma}{k}$.

\end{itemize}
\end{proof}

The results of the local stability analysis for the model with public health intervention are summarized in Table~\ref{tab:LAS_summary_gamma}. As with the model assuming constant payoffs (see main text), we conducted numerical simulations to illustrate these analytical findings, obtaining qualitatively similar outcomes. Figure~\ref{fig:model2} presents a colormap together with four representative trajectories of the aquatic and adult mosquito populations \((L_v, A_v)\) and the household control behavior \(w(t)\), each converging to the steady states derived in Proposition~\ref{prop: Model with PH Intervention}.

\begin{table}[ht]
\centering
\renewcommand{\arraystretch}{1.7}
{\color{black}
\begin{tabular}{|c|c|c|}
\hline
\textbf{Parameter Regime} & \boldmath{$N < 1$} & \boldmath{$N > 1$} \\
\hline
\boldmath{$r_c - r_d > \dfrac{\gamma}{k}$} 
& \begin{tabular}[c]{@{}l@{}} 
 $\tilde{E}_{01}$ is LAS. \\
 $E_{02}$ is unstable. \\
$\tilde{E}_{03}$ and $E_{04}$ do not exist.
\end{tabular}
& \begin{tabular}[c]{@{}l@{}} 
 $\tilde{E}_{03}$ is LAS. \\
 $\tilde{E}_{01}, E_{02}, E_{04}$ are unstable.
\end{tabular} \\
\hline
\boldmath{$r_c - r_d < \dfrac{\gamma}{k}$} 
& \begin{tabular}[c]{@{}l@{}} 
 $E_{02}$ is LAS. \\
 $\tilde{E}_{01}, \tilde{E}_{03}, E_{04}$ do not exist.
\end{tabular}
& \begin{tabular}[c]{@{}l@{}} 
 $E_{04}$ is LAS. \\
 $E_{02}$ is unstable. \\
 $\tilde{E}_{01}$ and $\tilde{E}_{03}$ do not exist.
\end{tabular} \\
\hline
\end{tabular}
}
\vspace{0.3em}
\caption{Local asymptotic stability (LAS) of steady states for the model with public health influence $\gamma$, under varying regimes of the basic offspring number $N$ and the threshold-adjusted behavioral payoff difference $r_c - r_d$.}
\label{tab:LAS_summary_gamma}
\end{table}

\begin{figure}[H] 
  \centering
  \includegraphics[width=\textwidth]{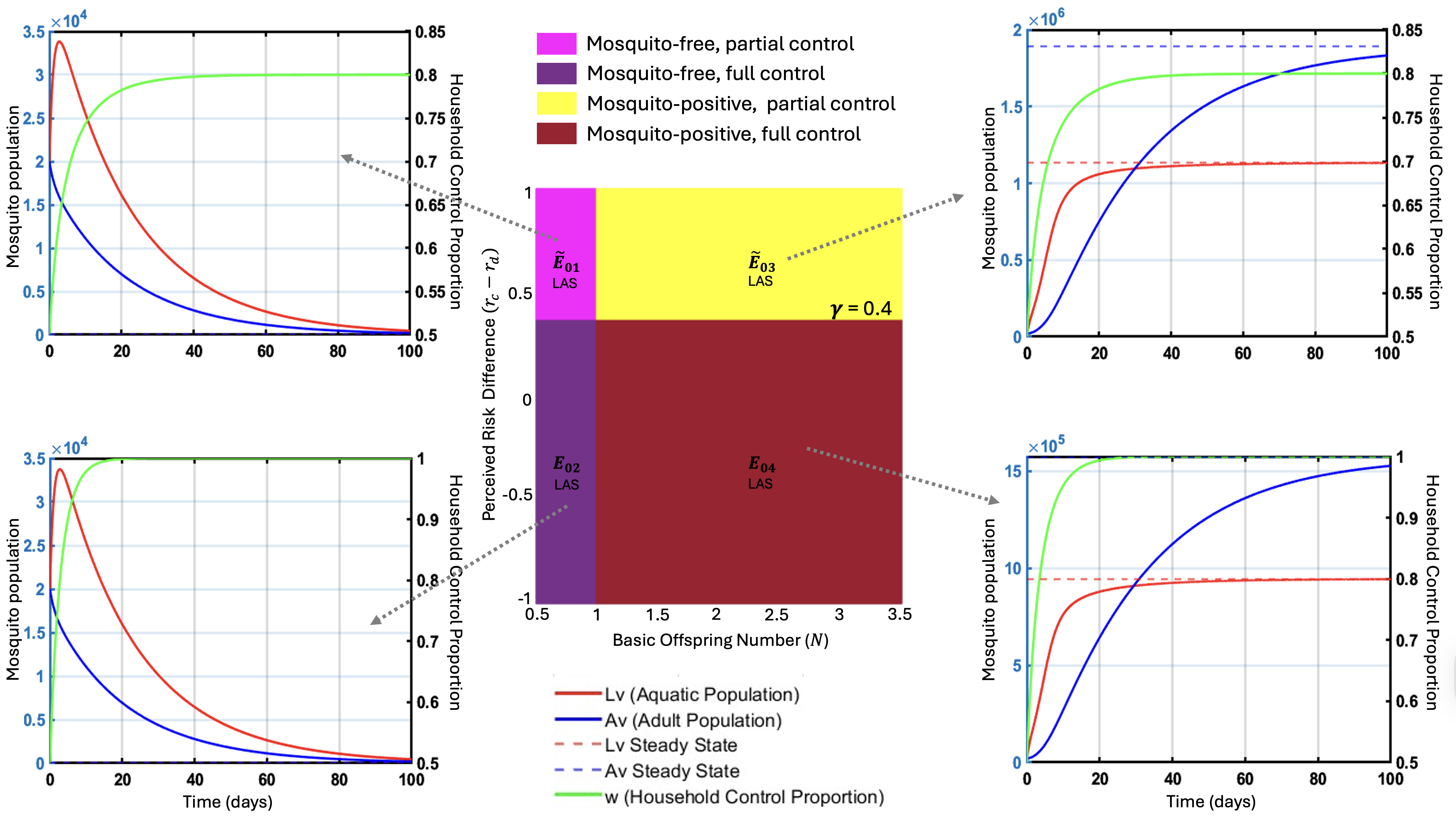}  
  \caption{Parameter regions of stability and sample trajectories for the simplified model with constant public health intervention ($\gamma > 0$). The central colormap shows the four regions in which the steady states are locally asymptotically stable (LAS). For each region, representative system trajectories are displayed, illustrating convergence to the corresponding steady state (dashed line).
  Here we chose $\gamma = 0.4$. Parameter values taken from \cite{dumont_mosquito_control}: $r = 0.5$,  $\nu_L   = 0.067\,\text{days}^{-1}$, $\mu_L = 0.62\,\text{days}^{-1}$, and $\mu_A =  0.04\,\text{days}^{-1}$. Other parameters set to plausible values: $K_{v_{\max}} = 2\times 10^{6}$, $K_{v_{\min}} = 1\times 10^{6}$, $t_{\text{span}} = [0,100]$ days, $k = 0.8\,\text{days}^{-1}$, and $b$ ranging from $1$ to $15$ to generate different $N$ values. Initial conditions: $L_0 = 20000$, $A_0 = 20000$, and $w_0 = 0.5$.}  
  \label{fig:model2}  
\end{figure}

\end{document}